\documentclass[9pt]{article}
\usepackage[T1]{fontenc}
\usepackage{lmodern}
\usepackage[a4paper,margin=1in]{geometry}
\usepackage{amsmath,amssymb,amsthm,mathtools}
\usepackage{enumitem,booktabs,microtype}
\usepackage{cite}
\usepackage{xcolor}
\usepackage[hidelinks]{hyperref}
\allowdisplaybreaks
\numberwithin{equation}{section}
\newtheorem{theorem}{Theorem}[section]
\newtheorem{lemma}[theorem]{Lemma}
\newtheorem{proposition}[theorem]{Proposition}
\newtheorem{corollary}[theorem]{Corollary}
\theoremstyle{definition}
\newtheorem{assumption}[theorem]{Assumption}
\newtheorem{definition}[theorem]{Definition}
\newtheorem{algorithm}[theorem]{Algorithm}
\theoremstyle{remark}
\newtheorem{remark}[theorem]{Remark}
\newtheorem{example}[theorem]{Example}
\newcommand{\R}{\mathbb R}

\newcommand{\Sset}{\mathcal S}
\newcommand{\ind}{\iota}
\newcommand{\whpartial}{\widehat\partial}
\newcommand{\norm}[1]{\left\lVert #1\right\rVert}
\newcommand{\ip}[2]{\left\langle #1,#2\right\rangle}
\newcommand{\abs}[1]{\left\lvert #1\right\rvert}
\newcommand{\Gcal}{\mathcal G}
\newcommand{\Ecal}{\mathcal E}
\newcommand{\Ccal}{\mathcal C}

\newcommand{\clB}{\overline{\mathbb B}}

\newcommand{\Sone}{\mathbb S^1}
\newcommand{\h}[1]{\mathbf{#1}}

\newcommand{\nuf}{\boldsymbol\nu}
\DeclareMathOperator{\dist}{dist}
\DeclareMathOperator{\dom}{dom}
\DeclareMathOperator{\epi}{epi}
\DeclareMathOperator{\prox}{prox}
\DeclareMathOperator{\proj}{Proj}
\DeclareMathOperator{\clust}{clust}
\newcommand{\doi}[1]{\href{https://doi.org/#1}{doi:\nolinkurl{#1}}}

\title{Whole-Sequence Convergence of Variable-Smoothing Full-Splitting Methods via a Lifted Kurdyka–\L{}ojasiewicz Framework}
\author{Min Tao\\
\small School of Mathematics and National Key Laboratory for Novel Software Technology,\\[-1mm]
\small Nanjing University, Nanjing 210093, China\\[-1mm]
\small \texttt{taom@nju.edu.cn}}
\date{}
\hypersetup{
 pdftitle={Whole-Sequence Convergence of Variable-Smoothing Full-Splitting Methods via a Lifted Kurdyka–Łojasiewicz Framework},
 pdfauthor={Min Tao},
 pdfkeywords={structured fractional programming, full splitting, variable smoothing, Kurdyka--Lojasiewicz property, global convergence}
}

\begin{document}
\maketitle

\begin{abstract}
We establish whole-sequence convergence of the primal iterates of
the smoothing-based full-splitting proximal subgradient method
(S-FSPS) of Bo\c{t}, Li, and Tao (SIAM J. Optim., 35 (2025),
pp.~2623--2653) with a prescribed, nonsummable sequence of vanishing
smoothing parameters. The challenge is that each iteration uses a
different smoothed model. We address this by constructing fixed lifted
potentials, deriving compatible sufficient-decrease and relative-error
estimates, and applying a scaled Kurdyka--\L{}ojasiewicz (KL)
finite-length argument that controls the residuals caused by changes
in the smoothing parameter. Under a KL assumption on the corresponding
fixed lifted potential and a weighted summability condition on these
residuals, the primal trajectory has finite length. For power schedules
$\gamma_k=(k+k_0)^{-\beta}$, with $k_0\geq1$ and
$1/2<\beta\leq1$, the summability condition holds when the lift
exponent is sufficiently large. Consequently, the primal sequence
converges to an exact limiting lifted stationary point without imposing full-row-rank assumptions on the linear operators. As a corollary, we
establish whole-sequence convergence for a variable-smoothing
full-splitting projected-gradient method in nonconvex nonsmooth
composite optimization. Examples distinguish primal convergence from
convergence of the auxiliary dual variables and illustrate the role of
nonsummability in guaranteeing exact stationarity.
\end{abstract}
\noindent\textbf{Key words.}
Structured fractional programming, full splitting, variable smoothing,
Kurdyka--\L{}ojasiewicz property, global convergence.

\noindent\textbf{MSC codes.} 90C32, 90C26, 49M27, 65K05.

\section{Introduction}\label{sec:intro}

We consider the structured fractional program
\begin{equation}\label{eq:problem}
 \min_{x\in\Sset} F(x):=\frac{g(Ax)+h(x)}{f(Kx)},
\end{equation}
where $\Sset\subseteq\R^n$ is nonempty, convex, and compact;
$f:\R^p\to\overline{\R}:=\R\cup\{+\infty\}$ and
$g:\R^s\to\overline{\R}$ are proper, convex, and lower semicontinuous;
$A:\R^n\to\R^s$ and $K:\R^n\to\R^p$ are linear operators;
and $h$ is differentiable with a Lipschitz continuous gradient on an open
set containing $\Sset$.
The numerator and denominator are finite and positive on $\Sset$.
The full standing assumptions are stated in Section~\ref{sec:problem}.

We analyze the smoothing-based full-splitting proximal subgradient
method (S-FSPS) of Bo\c{t}, Li, and Tao~\cite{BotLiTao2025}.
Its original analysis establishes the existence of a subsequence
converging to an exact limiting lifted stationary point. The
counterexamples in~\cite{Tao2026} show that the standing assumptions
do not ensure convergence of the whole primal sequence, even with
harmonic smoothing and exact lifted stationarity of every cluster
point. We identify additional geometric and smoothing-schedule
conditions under which the primal sequence has finite length and
converges to an exact limiting lifted stationary point.

The main challenge is that the iteration uses a changing sequence of
smoothed models. We address this by incorporating the smoothing
parameter as an additional variable in fixed lifted potentials and
establishing sufficient-decrease and relative-error estimates along
the S-FSPS trajectory. These
estimates account for the change in the Moreau parameter and, together
with a scaled KL argument, yield finite primal length. The fixed
potentials also allow the zero-smoothing limit to be treated within
the same analysis.

\subsection{Related work}\label{sec:related-work}

Variational properties of smoothing families provide a foundation for
passing from smooth models to nonsmooth problems. Burke and
Hoheisel~\cite{BurkeHoheisel2017} study epi-convergence and graphical
convergence of gradients to subdifferentials for smoothing by infimal
convolution. These approximation properties motivate the separate
trajectory question considered here: how to establish whole-sequence
convergence when the smoothing parameter changes at every iteration.

The Kurdyka--\L{}ojasiewicz (KL) property provides a standard way
for upgrading subsequential information to whole-sequence convergence
in nonsmooth nonconvex optimization. Classical analyses include the
proximal alternating framework of Attouch, Bolte, Redont, and
Soubeyran~\cite{AttouchBolteRedontSoubeyran2010}, the abstract
convergence framework of Attouch, Bolte, and
Svaiter~\cite{AttouchBolteSvaiter2013}, and the PALM analysis of
Bolte, Sabach, and Teboulle~\cite{BolteSabachTeboulle2014}. Extensions
to variable metrics and relative errors were developed by Frankel,
Garrigos, and Peypouquet~\cite{FrankelGarrigosPeypouquet2015}; their
framework  permits variable descent coefficients without a
uniform upper bound and additive relative errors.
Sun~\cite{Sun1} treats inexact nonconvex nonsmooth algorithms under
perturbed sufficient-decrease and relative-error conditions.

Classical smoothing frameworks include Nesterov's method for convex
functions with an explicit max structure~\cite{Nesterov2005}, the unified
first-order framework of Beck and Teboulle~\cite{BeckTeboulle2012},
and Chen's gradient-consistent approximations for nonconvex
problems~\cite{Chen2012}.
Variable-smoothing schemes have been analyzed for convex composite
optimization~\cite{BotBohm2020} and weakly convex composite
objectives~\cite{BohmWright2021}.
More recent results include complexity guarantees for an alternating
method for coupled composite optimization~\cite{LongEtAl2026} and
subsequential stationarity for a single-loop method with a parametrized
manifold constraint~\cite{KumeYamada2026}.
These results concern objective accuracy, complexity bounds for
approximate stationarity, or subsequential stationarity; they do not
resolve whole-sequence convergence of S-FSPS with vanishing
smoothing parameters.

Whole-sequence convergence has been obtained in several specialized
smoothing settings. Sabach, Teboulle, and
Voldman~\cite{SabachTeboulleVoldman2018} prove finite length and
convergence to a critical point for a smoothed clustering model with
a fixed smoothing parameter. Bian and Chen~\cite{BianChen2020}
establish whole-sequence convergence for an exact capped-$\ell_1$
relaxation of cardinality-penalized regression without invoking the
KL property; their analysis exploits the lower-bound structure of the
regularizer, support identification, and an adaptive smoothing update.

More directly related to variable smoothing, Xu, Pong, and
Sze~\cite{XuPongSze2026} and Xu, Pong, and
Zhang~\cite{XuPongZhang2026} propose smoothing moving-balls and
smoothing extended sequential quadratic methods, respectively, for
constrained difference-of-convex optimization.
In their convex specializations,
\cite[Theorem~4.3]{XuPongSze2026} and
\cite[Theorem~4.1]{XuPongZhang2026} establish convergence of the entire
primal sequence to a solution of the original constrained problem
satisfying the KKT conditions under the corresponding constraint
qualifications.
In addition to the respective model and algorithmic assumptions,
these whole-sequence convergence results assume that the smoothing
parameters $\mu_k$ satisfy
\begin{equation}\label{sum}
\mu_k\downarrow0,\qquad
\sum_{k=\lceil N/2\rceil}^{N}\mu_k\longrightarrow\infty
\quad\text{as }N\to\infty,
\qquad
\sum_{k=0}^{\infty}\mu_k^2<\infty.
\end{equation}
For power schedules $\mu_k=c(k+k_0)^{-r}$ with $c,k_0>0$,
these conditions hold if and only if $1/2<r<1$.
In particular, the harmonic schedule $r=1$ is not covered by
these conditions.
Their convex convergence proofs use perturbed Fej\'er estimates
and summable errors.
In nonconvex settings, the results concern approximate KKT
complexity bounds or subsequential stationarity, rather than
whole-sequence convergence of the primal iterates.

Related KL techniques have also been used in settings without a
vanishing smoothing parameter. Ouyang, Liu, Pong, and
Wang~\cite{OuyangLiuPongWang2025}, for example, study the transfer of
KL exponents through fixed smooth reformulations obtained by Hadamard
parametrization of $\ell_1$-regularized models. For nonconvex nonsmooth
fractional programs, Bo\c{t}, Dao, and Li~\cite{BotDaoLi2022} study
extrapolated proximal subgradient algorithms. Bo\c{t}, Li, and
Tao~\cite{BotLiTao2025} introduce two distinct full-splitting schemes.
For their prescribed vanishing-smoothing S-FSPS algorithm, the analysis
establishes the existence of a subsequence converging to an
exact limiting lifted stationary point. Their Adaptive FSPS algorithm uses extrapolation
and backtracking, with an eventually fixed positive smoothing parameter;
under its additional hypotheses, including a KL assumption,
\cite[Theorem~6.11]{BotLiTao2025} establishes whole-sequence convergence
to an approximate lifted stationary point. The present paper proves
finite primal length and whole-sequence convergence to an exact
limiting lifted stationary point for the original S-FSPS algorithm
\cite[Algorithm~4.1]{BotLiTao2025}, under additional lifted KL and
smoothing-schedule conditions.

\subsection{Main contributions}

The key analytical task is to construct a fixed potential whose
sufficient-decrease and relative-error estimates remain compatible
with the changing smoothing models along the original S-FSPS
iteration. For $k\geq1$, the primal step uses
$\nabla g_{\gamma_{k-1}}(Ax^k)$, while the next potential value
involves $g_{\gamma_k}(Ax^{k+1})$. The relative-error estimate
must therefore control
\[
 \nabla g_{\gamma_k}(Ax^{k+1})
 -\nabla g_{\gamma_{k-1}}(Ax^k),
\]
which reflects simultaneous changes in the iterate and the
smoothing parameter. Additional difficulties arise from the
growth of the inverse stepsize as $\gamma_k\downarrow0$ and
the possible oscillation of the denominator dual selections.

Our main contributions are as follows.
\begin{enumerate}
\item \emph{Fixed potentials and compatible estimates.}
We construct two fixed lifted potentials by eliminating the
numerator dual variable through the Moreau envelope and
incorporating the smoothing parameter as a variable
$\tau=\gamma^{1/q}$, where the integer $q\geq2$ is the lift
exponent. The Fenchel-denominator potential allows nonsmooth \(f\); 
a second potential, involving fewer variables, 
uses the original denominator under the assumption that \(f\)
 has a Lipschitz continuous gradient.
For each potential, we establish compatible sufficient-decrease
and relative-error estimates along the original single-loop
full-splitting iteration. The relative-error estimate explicitly
controls the simultaneous changes in the Moreau argument and
smoothing parameter displayed above.

\item \emph{Subgradient inclusions without calmness.}
Using smooth lower-support constructions, we derive explicit
Fr\'echet-subgradient inclusions for both potentials at positive smoothing parameters and at zero-smoothing cluster points.
For the Fenchel potential, these inclusions are established
without any calmness assumption on $f^*$. In particular,
the calmness assumption used in
\cite[Lemma~6.4]{BotLiTao2025} is unnecessary for the
inclusions needed in our analysis.

\item \emph{Scaled KL analysis and finite primal length.}
We develop a scaled KL finite-length framework that accommodates
unbounded descent coefficients and controls smoothing-induced
residuals through weighted summability conditions.
Building on variable-coefficient KL convergence techniques
\cite{FrankelGarrigosPeypouquet2015}, we establish a formulation
in which the displacement sequence measures only the primal motion.
Under the corresponding KL and regularity assumptions, together
with the required summability conditions, the primal sequence has
finite length and therefore converges. Combining this finite-length
property with the subsequential stationarity result guaranteed by
nonsummable smoothing yields convergence to an exact limiting
lifted stationary point.

For definable data in a common o-minimal expansion of the real
field satisfying the corresponding assumptions, the result applies
to every power smoothing schedule
$
\gamma_k=(k+k_0)^{-\beta},\; k_0\geq1,\;
\frac12<\beta\leq1,
$
by choosing a sufficiently large integer lift exponent \(q\geq2\).
In particular, the harmonic case \(\beta=1\) is also covered.

\item \emph{Composite specialization and structural phenomena.}
We specialize the analysis to nonsmooth nonconvex composite
optimization, obtaining whole-sequence convergence for the
corresponding variable-smoothing full-splitting projected-gradient
scheme. We further provide examples illustrating that convergence
of the primal sequence does not necessarily imply convergence of
the associated dual sequence, and that exact limiting lifted
stationarity may differ from limiting stationarity of the original
fractional objective.
\end{enumerate}

As in \cite{BotLiTao2025}, no full-row-rank assumption on $A$
or $K$ is imposed. Relative to the Adaptive FSPS convergence
result in \cite[Theorem~6.11]{BotLiTao2025}, our result for
S-FSPS does not require essential strict convexity of $g$
and establishes exact rather than approximate limiting lifted
stationarity. These comparisons concern the respective
analyses of distinct iterations.
The fixed-potential analysis also complements the nonconvergence
constructions in \cite{Tao2026} by identifying geometric and
schedule conditions that ensure finite primal length and
exclude those nonconvergent S-FSPS trajectories.

\medskip
\noindent\textbf{Organization.}
Section~\ref{sec:problem} introduces the notation, preliminary
results, and standing assumptions.
Section~\ref{sec:algorithm} recalls S-FSPS and its subsequential
convergence properties.
Section~\ref{sec:potential} constructs the two fixed lifted
potentials and establishes their subgradient inclusions,
sufficient-decrease estimates, and relative-error estimates.
Section~\ref{sec:main} develops the scaled KL framework and
proves whole-sequence convergence of the primal iterates.
Section~\ref{sec:examples} presents the composite specialization,
discusses the relation to the counterexamples in \cite{Tao2026},
and gives examples clarifying the distinctions between primal
and dual convergence and between the stationarity notions.
Section~\ref{sec:conclusion} concludes the paper.
\section{Problem setting and preliminaries}\label{sec:problem}
\subsection{Notation and assumptions}\label{sec:prelim}
 The Euclidean norm is denoted by \(\|\cdot\|\), and
\(\langle\cdot,\cdot\rangle\) denotes the corresponding inner product.
Let $\overline \R:=\R\cup\{+\infty\}$.
For a function \(f:\mathbb{R}^n\to\overline{\mathbb{R}}\), its
\emph{effective domain} is defined by
$
\dom f:=\{x\in\mathbb{R}^n:f(x)<+\infty\},
$
and \(f\) is said to be \emph{proper} if \(\dom f\neq\emptyset\).
The \emph{(Fenchel) conjugate} of \(f\) is given by
$
f^*(v):=\sup_{x\in\mathbb{R}^n}
\{\langle v,x\rangle-f(x)\}.
$

Let $f:\R^d\to\overline{\R}$ be proper, and let
$\overline x\in\dom f:=\{x:f(x)<+\infty\}$. Its Fr\'echet
subdifferential at $\overline x$ is
\begin{eqnarray*}\label{eq:frechet}
 \whpartial f(\overline x)
 :=\left\{v\in\R^d:
 \liminf_{\substack{x\to\overline x\\x\ne\overline x}}
 \frac{f(x)-f(\overline x)-\langle v,x-\overline x\rangle}
 {\|x-\overline x\|}\ge0\right\}.
\end{eqnarray*}

\noindent The limiting subdifferential is defined by
\begin{eqnarray*}\label{eq:limiting}
 \partial f(\overline x)
 :=\left\{v\in\R^d:
 \begin{array}{l}
 \text{there exist }x^j\to\overline x\text{ and }v^j\to v
 \text{ such that}\\
 f(x^j)\to f(\overline x),\quad
 v^j\in\whpartial f(x^j)\text{ for every }j
 \end{array}\right\}.
\end{eqnarray*}
Consequently, $
 \whpartial f(\overline x)\subseteq\partial f(\overline x).
$
The domain of the limiting subdifferential is denoted by
$
\dom(\partial f):=\{x\in\mathbb{R}^d:\partial f(x)\neq\emptyset\}.
$

For a nonempty closed convex set $\mathcal C\subseteq\mathbb R^n$,
its \emph{indicator function} is defined by $\iota_{\mathcal C}(x)=0$
for $x\in\mathcal C$ and $\iota_{\mathcal C}(x)=+\infty$ otherwise.
The \emph{normal cone} to \(\mathcal C\) at \(x\) is denoted by
$
N_{\mathcal C}(x):=\partial\iota_{\mathcal C}(x)
$.
We denote the Euclidean projection onto $\mathcal C$ by
$\proj_{\mathcal C}$.
For a sequence \(\{x^k\}_{k\ge0}\), we denote by
$\clust\{x^k\}$
the set of its cluster points.
Given a linear operator $T:{\mathbb R}^n\to{\mathbb R}^m$, we denote by $T^*$ its adjoint and write
$\sigma_T:=\|T\|=\sup\{\|Tx\|:\|x\|=1\}$ for its operator norm. In particular, $\sigma_A=\|A\|$ and $\sigma_K=\|K\|$.
Given a vector \(x\in\mathbb{R}^n\), \(x_i\) denotes its \(i\)th component.
 We use
$
    \Sone:=\bigl\{x\in\mathbb R^2:\|x\|=1\bigr\}
    \;\text{and}\;
    \overline{\mathbb B}_{R}
    :=\bigl\{x:\|x\|\leq R\bigr\}
$
to denote the unit circle and the closed ball of radius $R$,
respectively.

Throughout the paper, we assume that \(A\neq \mathbf{0}\) and use the
following standing conditions, adapted from
\cite[Assumption~3.1]{BotLiTao2025}.

\begin{assumption}\label{ass:standing}  Throughout this paper, we assume that
\begin{enumerate}[label=(\alph*),leftmargin=2.1em]
\item ${\cal S} \subseteq {\mathbb R}^n$ is a nonempty, convex, and compact set;
\item $g$ is a proper, convex, and lower semicontinuous function;
\item $h$ is differentiable on an open set containing
the compact set $\Sset$, and its gradient is $L_{\nabla h}$-Lipschitz
continuous there;
\item $f$ is proper, convex, and lower semicontinuous,
$K(\Sset)\subseteq\operatorname{int}(\dom f)$, and
$f(K\h x)>0$ for every $\h x\in\Sset$;
\item  $\alpha:=\inf_{\h x \in {\cal S}}\{g(A{\h x})+h({\h x})\}>0$;
\item  $A({\cal S}) \subseteq \dom (\partial g)$, and
 there exists a constant $\ell>0$ such that
$
{\rm dist}(0, \partial g(A \h x)) \le \ell \mbox{ for all } \h x  \in {\cal S}.
$
\end{enumerate}
\end{assumption}

\begin{definition}\label{def:stationarity}
For problem~\eqref{eq:problem}, a point
$x\in\Sset\cap A^{-1}(\dom\partial g)
\cap K^{-1}(\dom\partial f)
$
with \(f(Kx)>0\) is called a \emph{limiting lifted stationary point} if
\begin{eqnarray}\label{eq:lifted-stationarity}
0\in
f(Kx)
\bigl(
A^*\partial g(Ax)+\nabla h(x)+\partial\iota_{\mathcal S}(x)
\bigr)
-
(g(Ax)+h(x))K^*\partial f(Kx).
\end{eqnarray}
\end{definition}
This is the stationarity notion of~\cite[Definition~3.3]{BotLiTao2025}; see also~\cite[Definition~2.3]{Tao2026}. We use ``exact'' to emphasize that~\eqref{eq:lifted-stationarity} involves the original, nonsmoothed functions.

\begin{lemma}
\label{lem:denominator-bounds}
Under Assumption~\ref{ass:standing}, the following
statements hold:
\begin{itemize}
\item[(i)] there exist constants $m,M>0$ such
that
\begin{eqnarray}\label{eq:mM}
    0<m< f(Kx)\leq M
    \qquad(x\in {\cal S});
\end{eqnarray}
\item[(ii)] there exists a constant $B_{y}$ such that
for $x\in {\cal S},\ y\in\partial f(Kx)$,
\begin{eqnarray}\label{eq:y-uniform}
    \norm y\leq B_{y}.\end{eqnarray}
    \end{itemize}
\end{lemma}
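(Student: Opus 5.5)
Both parts follow from the fact that $K(\Sset)$ is a compact subset of $\operatorname{int}(\dom f)$. For (i), we use that a proper convex function is continuous on the interior of its domain. Assumption~\ref{ass:standing}(d) gives $K(\Sset)\subseteq\operatorname{int}(\dom f)$, so $f$ is continuous on a neighborhood of $K(\Sset)$. Hence $x\mapsto f(Kx)$ is continuous on $\Sset$, as the composition of a linear map with a function continuous on an open set containing the image. Since $\Sset$ is compact and nonempty (Assumption~\ref{ass:standing}(a)), the Weierstrass theorem shows that $f(Kx)$ attains its minimum $m_0$ and maximum $M$ on $\Sset$. By Assumption~\ref{ass:standing}(d), $f(Kx)>0$ everywhere on $\Sset$, so $m_0>0$. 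Taking $m:=m_0/2$ gives the strict inequality $0<m<f(Kx)\le M$ stated in \eqref{eq:mM}.

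For (ii), the plan is to use local boundedness of the convex subdifferential on the interior of the domain, together with compactness. The set $C:=K(\Sset)$ is compact, being the continuous image of a compact set, and it lies in the open set $\operatorname{int}(\dom f)$. So there is $\delta>0$ such that the closed $2\delta$-neighborhood $C_{2\delta}:=C+\clB_{2\delta}$ is still contained in $\operatorname{int}(\dom f)$. This follows from $\dist(C,\R^p\setminus\operatorname{int}(\dom f))>0$, with the usual convention if the complement is empty. The set $C_{2\delta}$ is compact, so the continuity of $f$ on $\operatorname{int}(\dom f)$ yields a bound $|f|\le L_f$ on $C_{2\delta}$.

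Now take $x\in\Sset$, $u:=Kx$, and $y\in\partial f(u)$ with $y\neq0$. Set $w:=u+\delta y/\norm{y}$, which lies in $C_{2\delta}$. The subgradient inequality gives
\[
\delta\norm y=\ip{y}{w-u}\le f(w)-f(u)\le 2L_f .
\]
Therefore $\norm y\le 2L_f/\delta=:B_y$, uniformly over $x\in\Sset$ and $y\in\partial f(Kx)$. For convex $f$ the limiting subdifferential coincides with the convex one, so this covers the subdifferential used in Definition~\ref{def:stationarity}.

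The only delicate point is the existence of the uniform margin $\delta$, that is, that $K(\Sset)$ is at positive distance from the boundary of $\dom f$. This is exactly where compactness of $\Sset$ and the interiority hypothesis in Assumption~\ref{ass:standing}(d) are used. Everything else is routine.
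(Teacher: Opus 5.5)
Your proof is correct and follows the paper's approach. For (i), the paper cites \cite[Lemma~3.5]{BotLiTao2025}, which you replace with the direct continuity-plus-compactness argument. For (ii), the paper invokes local boundedness of $\partial f$ on $\operatorname{int}(\dom f)$; your uniform margin $\delta$ and the subgradient inequality $\delta\norm{y}\le f(w)-f(u)$ are exactly the standard proof of that fact, made uniform over the compact set $K(\Sset)$.
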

\begin{proof} First, (i) follows directly from \cite[Lemma 3.5]{
BotLiTao2025}. Part (ii) follows because a proper lower
semicontinuous convex function is continuous, and its subdifferential is
locally bounded, on the interior of its effective domain.
\end{proof}

\subsection{\texorpdfstring{Moreau-envelope properties}{Moreau-envelope properties}}
For a proper, convex, and lower semicontinuous function
\(q:\mathbb{R}^n\to\overline{\mathbb{R}}\), its \emph{proximal operator}
with parameter \(\gamma>0\) is defined by
$$
\prox_{\gamma q}(x)
:=
\arg\min_{y\in\mathbb{R}^n}
\left\{
q(y)+\frac{1}{2\gamma}\|y-x\|^2
\right\}.
$$
The corresponding \emph{Moreau envelope} is defined by
\begin{eqnarray}\label{fgamma}
q_\gamma(x)
:=
\min_{y\in\mathbb{R}^n}
\left\{
q(y)+\frac{1}{2\gamma}\|y-x\|^2
\right\}.
\end{eqnarray}
For every \(x\in\mathbb{R}^n\),
$
q_\gamma(x)
=
\left(q^*+\frac{\gamma}{2}\|\cdot\|^2\right)^*(x).
$

See, for example, \cite{RockafellarWets1998,BauschkeCombettes2017} for the
subdifferential and proximal conventions used here.

\begin{lemma}\label{lem:moreau-identities}
Let $q:\R^s\to\overline\R$ be proper, lower semicontinuous, and convex.  For every
$x\in\R^s$ and $\gamma>0$, set
\begin{eqnarray}\label{pzgamma}
    p_\gamma(x):=\prox_{\gamma q}(x),
    \qquad
    z_\gamma(x):=\frac{x-p_\gamma(x)}{\gamma}.
\end{eqnarray}
Then:
\begin{enumerate}[label=(\roman*),leftmargin=2.1em]
\item The function $q_\gamma:\R^s\to\R$ defined in~\eqref{fgamma} is continuously
differentiable, with
$\nabla q_\gamma(x)=z_\gamma(x)\in\partial q(p_\gamma(x))$.
\item With $z_{\gamma}(x)$ defined by~\eqref{pzgamma}, we have
\begin{eqnarray}\label{eq:dual-prox}
 z_\gamma(x)
 =\prox_{q^*/\gamma}\!\left(\frac{x}{\gamma}\right).
\end{eqnarray}

\item For fixed $x$, the mapping $\gamma\mapsto q_\gamma(x)$ is continuously
differentiable on $(0,\infty)$ and
\begin{eqnarray}\label{eq:gamma-derivative}
    \frac{\partial}{\partial\gamma}q_\gamma(x)
    =-\frac12\norm{z_\gamma(x)}^2.
\end{eqnarray}
\item The gradient $\nabla q_\gamma$ is $1/\gamma$-Lipschitz.
Consequently, the function $u\mapsto q_\gamma(Au)$ on $\R^n$ has a
Lipschitz continuous gradient with constant $\sigma_A^2/\gamma$.
\end{enumerate}
\end{lemma}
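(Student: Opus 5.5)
The plan is to derive all four items from the optimality condition for the proximal subproblem, using two standard facts about proximal maps of proper, lower semicontinuous, convex functions: firm nonexpansiveness and Moreau's decomposition. For fixed $x$, the objective $y\mapsto q(y)+\frac{1}{2\gamma}\|y-x\|^2$ is strongly convex, proper, and lower semicontinuous. Hence $p_\gamma(x)$ is well defined and unique, and $q_\gamma(x)$ is finite. Fermat's rule gives $0\in\partial q(p_\gamma(x))+\frac1\gamma(p_\gamma(x)-x)$, which is exactly $z_\gamma(x)\in\partial q(p_\gamma(x))$.

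\emph{Part (i).} Differentiability of $q_\gamma$ is proved by a two-sided estimate. Write $p=p_\gamma(x)$ and $z=z_\gamma(x)$. Testing the minimization at $x'$ with the suboptimal choice $y=p$ gives
\[
q_\gamma(x')-q_\gamma(x)\le \frac{1}{2\gamma}\bigl(\|p-x'\|^2-\|p-x\|^2\bigr)=\langle z,x'-x\rangle+\frac{1}{2\gamma}\|x'-x\|^2 .
\]
Swapping the roles of $x$ and $x'$, with $p'=p_\gamma(x')$ and $z'=z_\gamma(x')$, gives $q_\gamma(x')-q_\gamma(x)\ge\langle z',x'-x\rangle-\frac{1}{2\gamma}\|x'-x\|^2$. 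By firm nonexpansiveness, $\mathrm{Id}-p_\gamma$ is nonexpansive, so $\|z'-z\|\le\|x'-x\|/\gamma$. The two bounds then show that $q_\gamma$ is differentiable with $\nabla q_\gamma(x)=z$. The same estimate proves the first claim of (iv): $\nabla q_\gamma$ is $1/\gamma$-Lipschitz and in particular continuous. The composite claim follows from $\nabla(q_\gamma\circ A)=A^*\nabla q_\gamma(A\cdot)$ and $\|A^*\|\,\|A\|=\sigma_A^2$.

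\emph{Part (ii).} Apply Moreau's decomposition $x=\prox_{\gamma q}(x)+\gamma\prox_{q^*/\gamma}(x/\gamma)$. Alternatively, invert the subgradient inclusion directly: $z\in\partial q(p)$ is equivalent to $p\in\partial q^*(z)$, that is, $x-\gamma z\in\partial q^*(z)$. This can be rewritten as $0\in\frac1\gamma\partial q^*(z)+z-\frac{x}{\gamma}$, which is the optimality condition defining $\prox_{q^*/\gamma}(x/\gamma)$.

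\emph{Part (iii).} This is the only step requiring some care, and I expect it to be the main obstacle. For fixed $x$, set $\phi(\gamma)=q_\gamma(x)$. For $\gamma,\gamma'>0$, test the problem at $\gamma'$ with $y=p_\gamma(x)$, and test the problem at $\gamma$ with $y=p_{\gamma'}(x)$. Since $\|x-p_\gamma(x)\|^2=\gamma^2\|z_\gamma\|^2$, this gives the bracket
\[
\Bigl(\tfrac{1}{2\gamma'}-\tfrac{1}{2\gamma}\Bigr)\gamma'^2\|z_{\gamma'}\|^2\le \phi(\gamma')-\phi(\gamma)\le\Bigl(\tfrac{1}{2\gamma'}-\tfrac{1}{2\gamma}\Bigr)\gamma^2\|z_{\gamma}\|^2 .
\]
Dividing by $\gamma'-\gamma$ and letting $\gamma'\to\gamma$ yields $\phi'(\gamma)=-\frac12\|z_\gamma(x)\|^2$, provided $\gamma\mapsto z_\gamma(x)$ is continuous. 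That continuity is the point to verify. One route uses the resolvent identity: $p_{\gamma'}(x)=p_\gamma\bigl(\frac{\gamma}{\gamma'}x+(1-\frac{\gamma}{\gamma'})p_{\gamma'}(x)\bigr)$, combined with nonexpansiveness of $p_\gamma$. A simpler route uses the optimality conditions $z_\gamma\in\partial q(p_\gamma)$, $z_{\gamma'}\in\partial q(p_{\gamma'})$ and monotonicity of $\partial q$: $\langle z_\gamma-z_{\gamma'},\,p_\gamma-p_{\gamma'}\rangle\ge0$. Substituting $p_\gamma=x-\gamma z_\gamma$ bounds $\|z_\gamma-z_{\gamma'}\|$ by a multiple of $|\gamma-\gamma'|\,\|z_{\gamma'}\|$. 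It also shows that $\|z_\gamma\|$ is locally bounded in $\gamma$, by comparison with a fixed $\gamma_0$. Hence $\gamma\mapsto z_\gamma(x)$ is locally Lipschitz, which gives continuity of the derivative and therefore $C^1$ regularity in $\gamma$.
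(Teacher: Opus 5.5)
Your proof is correct, but it takes a more self-contained route than the paper. The paper's proof is essentially by citation. Part (i) and the $1/\gamma$-Lipschitz claim come from standard Moreau-envelope theory (Bauschke--Combettes, Ch.~12), part (ii) is referred to Bo\c{t}--B\"ohm, and part (iii) is obtained in one line from Danskin's theorem.

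You instead derive everything from the prox optimality condition $z_\gamma(x)\in\partial q(p_\gamma(x))$. Suboptimal test points together with nonexpansiveness of $\mathrm{Id}-p_\gamma$ give the gradient and its Lipschitz constant. Moreau's decomposition, or the Fenchel--Young inversion $p\in\partial q^*(z)$, gives (ii). For (iii), the same test-point comparison, now in the parameter, gives the sandwich
\[
\Bigl(\tfrac{1}{2\gamma'}-\tfrac{1}{2\gamma}\Bigr)\gamma'^2\|z_{\gamma'}\|^2\le q_{\gamma'}(x)-q_\gamma(x)\le\Bigl(\tfrac{1}{2\gamma'}-\tfrac{1}{2\gamma}\Bigr)\gamma^2\|z_\gamma\|^2 .
\]

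The difference matters most in (iii). The classical form of Danskin's theorem assumes a compact index set and an objective continuous in the minimization variable. Neither is available here without further argument, since $y$ ranges over $\R^s$ and $q$ is only lower semicontinuous and extended-valued. The paper also does not separately check continuity of the resulting derivative, which the $C^1$ claim requires.

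Your sandwich is in effect a direct proof of the envelope formula in this setting. Your monotonicity bound $\|z_\gamma-z_{\gamma'}\|\le|\gamma-\gamma'|\,\|z_{\gamma'}\|/\gamma$ supplies exactly the continuity of $\gamma\mapsto z_\gamma(x)$ needed both to pass to the limit in the difference quotient and to conclude $C^1$ regularity. This is the same computation the paper carries out later in Lemma~\ref{lem:moreau-parameter-gradient}.

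The paper's version buys brevity. Yours buys a complete argument under precisely the stated hypotheses, and it anticipates the parameter-stability estimate the paper uses later.
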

\begin{proof}
(i) The objective in (\ref{fgamma}) is strongly convex, so its minimizer
$p_\gamma(x)$ is unique.  Its optimality condition is
$
    0\in\partial q(p_\gamma(x))
      +\frac1\gamma\bigl(p_\gamma(x)-x\bigr),
$
which is equivalent to
$z_\gamma(x)\in\partial q(p_\gamma(x))$.  Standard properties of the Moreau
envelope give
$\nabla q_\gamma(x)=z_\gamma(x)$ and the
$1/\gamma$-Lipschitz continuity of this gradient; see, for example,
\cite[Chapter~12]{BauschkeCombettes2017}.
(ii) For the proof of \eqref{eq:dual-prox}, we refer the reader to \cite{BotBohm2020}.
(iii) Danskin's
theorem therefore yields
\[
 \frac{\partial}{\partial\gamma}q_\gamma(x)
 =-\frac{1}{2\gamma^2}\norm{x-p_\gamma(x)}^2
 =-\frac12\norm{z_\gamma(x)}^2,
\]
proving \eqref{eq:gamma-derivative}.
Finally, for every $u\in\R^n$,
\[
 \nabla\bigl(q_\gamma\circ A\bigr)(u)
 =A^*\nabla q_\gamma(Au).
\]
Therefore, $u\mapsto q_\gamma(Au)$ has a Lipschitz continuous gradient
with constant $\sigma_A^2/\gamma$.
\end{proof}

\begin{lemma}
\label{lem:uniform-moreau}
Suppose that $g$ is proper, convex, and lower semicontinuous and that Assumption \ref{ass:standing}(f) holds.
Then, for every $w\in A(\mathcal S)$ and every $\gamma>0$,
\begin{itemize}
\item[(i)]
$\|\nabla g_\gamma(w)\|\leq\ell;$
\label{eq:uniform-gradient-bound}
\item[(ii)]
$
\|\prox_{\gamma g}(w)-w\|\leq\gamma\ell$;
\label{eq:uniform-proximal-bound}
\item[(iii)]
$0\leq g(w)-g_\gamma(w)\leq\frac{\gamma\ell^2}{2}$;

\item[(iv)]
$|g(u)-g(v)|\leq\ell\|u-v\|
\;\;\text{for any }u,v\in A(\mathcal S)$;
\item[(v)]
For  every $x\in\Sset$ and every $0<\gamma_1\leq\gamma_2$,
$
 0\leq g_{\gamma_1}(Ax)-g_{\gamma_2}(Ax)
 \leq\frac{\ell^2}{2}(\gamma_2-\gamma_1)$.

\end{itemize}
\end{lemma}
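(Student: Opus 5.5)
The whole lemma rests on one pointwise fact. For $w\in A(\Sset)$, Assumption~\ref{ass:standing}(f) gives $w\in\dom(\partial g)$ and $\dist(0,\partial g(w))\le\ell$. Since $\partial g(w)$ is nonempty, closed, and convex, its minimal-norm element $\xi_w:=\proj_{\partial g(w)}(0)$ exists and satisfies $\|\xi_w\|\le\ell$. We prove (ii) first, derive (i) from it, and then obtain (iii)--(v) from (i) using Lemma~\ref{lem:moreau-identities}.

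\emph{Step 1: (ii).} Let $p:=\prox_{\gamma g}(w)$ and $z:=(w-p)/\gamma$. By Lemma~\ref{lem:moreau-identities}(i), $z\in\partial g(p)$. Monotonicity of $\partial g$, applied to the pairs $(p,z)$ and $(w,\xi_w)$, gives
\[
\langle z-\xi_w,\,p-w\rangle\ge0 .
\]
Substituting $p-w=-\gamma z$ yields $\langle z-\xi_w,z\rangle\le0$. Hence $\|z\|^2\le\langle\xi_w,z\rangle\le\ell\|z\|$, so $\|z\|\le\ell$. This gives $\|p-w\|=\gamma\|z\|\le\gamma\ell$, which is (ii).

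\emph{Step 2: (i).} By Lemma~\ref{lem:moreau-identities}(i), $\nabla g_\gamma(w)=z$, and Step~1 showed $\|z\|\le\ell$.

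\emph{Step 3: (iii).} The lower bound $g_\gamma(w)\le g(w)$ follows by taking $y=w$ in \eqref{fgamma}. For the upper bound, write $g_\gamma(w)=g(p)+\frac{\gamma}{2}\|z\|^2$ and use the subgradient inequality $g(w)\ge g(p)+\langle z,w-p\rangle=g(p)+\gamma\|z\|^2$. Together these give
\[
0\le g(w)-g_\gamma(w)\le\tfrac{\gamma}{2}\|z\|^2\le\tfrac{\gamma\ell^2}{2}.
\]

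\emph{Step 4: (iv).} Fix $u,v\in A(\Sset)$. The set $A(\Sset)$ is convex because $\Sset$ is convex and $A$ is linear. For each $\gamma>0$, $g_\gamma$ is $C^1$, and by Step~2 its gradient is bounded by $\ell$ along the segment $[u,v]\subseteq A(\Sset)$. The mean value theorem therefore gives $|g_\gamma(u)-g_\gamma(v)|\le\ell\|u-v\|$. Letting $\gamma\downarrow0$ and using (iii), which gives $g_\gamma\to g$ pointwise on $A(\Sset)$, we obtain (iv). This limit passage is the only point that needs care: it requires (iii) at both endpoints $u$ and $v$, and both lie in $A(\Sset)$.

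\emph{Step 5: (v).} Apply Lemma~\ref{lem:moreau-identities}(iii) at $w=Ax$. The map $\gamma\mapsto g_\gamma(Ax)$ is $C^1$ with derivative $-\frac12\|z_\gamma(Ax)\|^2$, which lies in $[-\ell^2/2,0]$ by Step~2. Integrating this derivative over $[\gamma_1,\gamma_2]$ gives
\[
0\le g_{\gamma_1}(Ax)-g_{\gamma_2}(Ax)\le\tfrac{\ell^2}{2}(\gamma_2-\gamma_1).
\]

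Steps~3--5 are routine once the bound $\|z\|\le\ell$ is in hand. The key estimate, and the main obstacle, is the monotonicity argument in Step~1: it converts the dual bound $\dist(0,\partial g(w))\le\ell$ at the base point $w$ into a bound on the gradient of the envelope at $w$, for every value of the smoothing parameter $\gamma$.
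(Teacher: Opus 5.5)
There is a genuine gap in your Step~3, where the inequality runs the wrong way. Steps~1, 2, and~5 are correct. Step~5 is exactly the paper's argument for (v). For (i)--(iv) the paper simply cites \cite[Lemma~2.4]{Tao2026}, so your monotonicity argument would serve as a self-contained replacement for that citation.

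\textbf{Step 3.} Combine $g_\gamma(w)=g(p)+\frac{\gamma}{2}\|z\|^2$ with the subgradient inequality at $p$, namely $g(w)\ge g(p)+\gamma\|z\|^2$. This gives $g(w)-g_\gamma(w)\ge\frac{\gamma}{2}\|z\|^2$, which is a lower bound. The upper bound you assert, $g(w)-g_\gamma(w)\le\frac{\gamma}{2}\|z\|^2$, is false in general. For $g(u)=u^2/2$ and $w=1$ one has $p=z=1/(1+\gamma)$, and
\[
g(1)-g_\gamma(1)=\frac{\gamma}{2(1+\gamma)}>\frac{\gamma}{2(1+\gamma)^2}=\frac{\gamma}{2}z^2 .
\]
A subgradient at $p$ can bound $g(w)-g(p)$ only from below. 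An upper bound needs a subgradient at $w$, which is exactly what Assumption~\ref{ass:standing}(f) provides.

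\textbf{Repair.} Take $\xi_w$ from your opening paragraph. The inequality $g(p)\ge g(w)+\langle\xi_w,p-w\rangle$ yields
\[
 g(w)-g_\gamma(w)\le\gamma\langle\xi_w,z\rangle-\tfrac{\gamma}{2}\|z\|^2
 \le\tfrac{\gamma}{2}\|\xi_w\|^2\le\tfrac{\gamma\ell^2}{2}.
\]
Equivalently, $g_\gamma(w)\ge\min_y\{g(w)+\langle\xi_w,y-w\rangle+\frac{1}{2\gamma}\|y-w\|^2\}=g(w)-\frac{\gamma}{2}\|\xi_w\|^2$. This is the same lower-support device as in~\eqref{eq:zero-envelope-support}.

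\textbf{Step 4.} As written, Step~4 inherits the gap, because it uses the upper bound in (iii) to get $g_\gamma\to g$ on $A(\Sset)$. It becomes valid once (iii) is repaired. You can also get (iv) without any limit passage. For $u,v\in A(\Sset)$, $g(v)-g(u)\ge\langle\xi_u,v-u\rangle\ge-\ell\|u-v\|$. Exchanging $u$ and $v$ gives $|g(u)-g(v)|\le\ell\|u-v\|$ directly, without the convexity of $A(\Sset)$ or the envelope.
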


\begin{proof}
Parts (i)--(iv) follow from \cite[Lemma~2.4]{Tao2026}.
For part (v), integrating
\eqref{eq:gamma-derivative} from $\gamma_1$ to $\gamma_2$ and using
part~(i) yields
$
 g_{\gamma_1}(Ax)-g_{\gamma_2}(Ax)
 =\frac12\int_{\gamma_1}^{\gamma_2}
 \norm{\nabla g_t(Ax)}^2\,dt
 \leq\frac{\ell^2}{2}(\gamma_2-\gamma_1).
$
\end{proof}

\begin{lemma}
\label{lem:moreau-parameter-gradient}
For \(u=Ax\) with \(x\in{\cal  S}\) and \(0<\gamma\leq\widehat\gamma\),
\begin{eqnarray}\label{eq:moreau-parameter-gradient}
  \|\nabla g_\gamma(u)-\nabla g_{\widehat\gamma}(u)\|
  \leq
  \ell\frac{\widehat\gamma-\gamma}{\widehat\gamma}
  \leq \ell\frac{\widehat\gamma-\gamma}{\gamma}.
\end{eqnarray}
\end{lemma}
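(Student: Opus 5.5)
We need to bound $\|\nabla g_\gamma(u)-\nabla g_{\widehat\gamma}(u)\|\le \ell(\widehat\gamma-\gamma)/\widehat\gamma$ for $u=Ax$, $x\in\Sset$. The second inequality in \eqref{eq:moreau-parameter-gradient} is immediate from $\gamma\le\widehat\gamma$, so the work is in the first.

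The plan is to use the resolvent identity for proximal maps with different parameters. Write $p=\prox_{\widehat\gamma g}(u)$ and $z=\nabla g_{\widehat\gamma}(u)=(u-p)/\widehat\gamma\in\partial g(p)$, using Lemma~\ref{lem:moreau-identities}(i). Set $\lambda:=\gamma/\widehat\gamma\in(0,1]$ and
\[
 v:=\lambda u+(1-\lambda)p .
\]
Then
\[
 u-p=\widehat\gamma z \quad\text{and}\quad v-p=\lambda\widehat\gamma z=\gamma z .
\]
Hence $(v-p)/\gamma\in\partial g(p)$, which is exactly the optimality condition for $p=\prox_{\gamma g}(v)$, and therefore $\nabla g_\gamma(v)=z=\nabla g_{\widehat\gamma}(u)$.

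Next we compare $\nabla g_\gamma(u)$ with $\nabla g_\gamma(v)$. One could use the $1/\gamma$-Lipschitz bound of Lemma~\ref{lem:moreau-identities}(iv), which gives
\[
 \|\nabla g_\gamma(u)-\nabla g_\gamma(v)\|\le \frac{1-\lambda}{\gamma}\|u-p\| = \frac{\widehat\gamma-\gamma}{\gamma}\|z\|.
\]
With $\|z\|\le\ell$ from Lemma~\ref{lem:uniform-moreau}(i), this yields only the weaker bound $\ell(\widehat\gamma-\gamma)/\gamma$.

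To get the sharper factor $1/\widehat\gamma$, we instead exploit firm nonexpansiveness directly. Write $z_\gamma:=\nabla g_\gamma(u)$. Monotonicity of $\partial g$ at the points $p_\gamma=\prox_{\gamma g}(u)$ and $p$, with subgradients $z_\gamma$ and $z$, gives
\[
 \langle z_\gamma-z,\;p_\gamma-p\rangle\ge0 .
\]
Substituting $p_\gamma=u-\gamma z_\gamma$ and $p=u-\widehat\gamma z$ turns this into
\[
 \langle z_\gamma-z,\;\widehat\gamma z-\gamma z_\gamma\rangle\ge 0 .
\]
Expanding with $d:=z_\gamma-z$ gives
\[
 \langle d,(\widehat\gamma-\gamma)z-\gamma d\rangle\ge0,
 \quad\text{i.e.}\quad
 \gamma\|d\|^2\le(\widehat\gamma-\gamma)\langle d,z\rangle .
\]
By Cauchy--Schwarz this gives $\|d\|\le\frac{\widehat\gamma-\gamma}{\gamma}\|z\|$, which is again only the $1/\gamma$ version.

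The main obstacle is therefore the sharp factor $1/\widehat\gamma$. To obtain it, we also exploit the norm bound available from $\|z_\gamma\|\le\ell$. Monotonicity gives $\langle z_\gamma-z,p_\gamma-p\rangle\ge0$, and we can rearrange this in the form
\[
 \widehat\gamma\|d\|^2\le(\widehat\gamma-\gamma)\langle d,z_\gamma\rangle ,
\]
with $z_\gamma$ playing the role previously played by $z$. To check this, note that $\widehat\gamma z-\gamma z_\gamma=-\widehat\gamma d+(\widehat\gamma-\gamma)z_\gamma$. Then Cauchy--Schwarz together with Lemma~\ref{lem:uniform-moreau}(i), which gives $\|z_\gamma\|\le\ell$ for $u\in A(\Sset)$, yields
\[
 \|d\|\le \ell\,\frac{\widehat\gamma-\gamma}{\widehat\gamma},
\]
which is the first inequality in \eqref{eq:moreau-parameter-gradient}. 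The second inequality then follows from $\gamma\le\widehat\gamma$.
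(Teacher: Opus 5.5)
Your final argument is exactly the paper's proof. You apply monotonicity of $\partial g$ at $\prox_{\gamma g}(u)$ and $\prox_{\widehat\gamma g}(u)$, rewrite it as $\widehat\gamma\|d\|^2\le(\widehat\gamma-\gamma)\langle d,\nabla g_\gamma(u)\rangle$, and conclude with Cauchy--Schwarz and $\|\nabla g_\gamma(u)\|\le\ell$; the trivial case $d=0$ should be noted when dividing. The resolvent-identity and $1/\gamma$ detours are correct but unnecessary.
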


\begin{proof}
Let $p=\prox_{\gamma g}(u)$ and $\widehat p=\prox_{\widehat\gamma g}(u)$.
Then,
\[
  p=u-\gamma z,\qquad
  \widehat p=u- {\widehat\gamma} \widehat{ z},
\]
and
\[
  z=\nabla g_\gamma(u),\qquad
  \widehat z=\nabla g_{\widehat\gamma}(u).
\]

\noindent Lemma \ref{lem:moreau-identities}(i) gives \(z\in\partial g(p)\) and
\(\widehat z\in\partial g(\widehat p)\).  Monotonicity of \(\partial g\)
therefore yields
\begin{align*}
  0
  &\leq \langle z-\widehat z,p-\widehat p\rangle=-\widehat\gamma\|z-\widehat z\|^2
    +(\widehat\gamma-\gamma)\langle z-\widehat z,z\rangle.
\end{align*}
Since \(\|z\|\leq\ell\), division by
\(\widehat\gamma\|z-\widehat z\|\) when \(z\neq\widehat z\) proves
\eqref{eq:moreau-parameter-gradient}; the case of
$z=\widehat z$ is immediate.
\end{proof}

\subsection{The KL property}
\begin{definition}\label{def:KL}
A proper lower semicontinuous function $f:\R^d\to\overline \R$ has the
Kurdyka--\L{}ojasiewicz property at $\overline w\in\dom\partial f$ if
there exist a neighborhood $U$ of $\overline w$, $a>0$, and a continuous
concave function $\varphi:[0,a)\to[0,\infty)$ such that
$\varphi(0)=0$, $\varphi\in C^1(0,a)$, $\varphi'>0$, and
\begin{eqnarray}\label{eq:KL-def}
 \varphi'(f(w)-f(\overline w))\dist(0,\partial f(w))\geq1
\end{eqnarray}
whenever $w\in U$ and $f(\overline w)<f(w)<f(\overline w)+a$.
It is a KL function if it has this property throughout $\dom\partial f$.
\end{definition}

\begin{lemma}[Uniformized KL inequality]\label{lem:uniform-KL}
Let $f$ be proper and lower semicontinuous and let
$\Omega\subseteq\dom\partial f$ be compact. If $f$ is constant on
$\Omega$ and has the KL property at every point of $\Omega$, then
there are $\varepsilon,a>0$ and a concave desingularizing function
$\varphi$ such that~\eqref{eq:KL-def}, with $f(\overline w)=f(\Omega)$,
holds whenever $\dist(w,\Omega)<\varepsilon$ and
$f(\Omega)<f(w)<f(\Omega)+a$.
\end{lemma}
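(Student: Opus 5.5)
The plan is the standard compactness-and-covering argument of Bolte, Sabach, and Teboulle~\cite{BolteSabachTeboulle2014}. Let $\zeta:=f(\Omega)$ be the common value of $f$ on $\Omega$. For each $\overline w\in\Omega$, Definition~\ref{def:KL} supplies an open ball $B(\overline w,\varepsilon_{\overline w})$, a constant $a_{\overline w}>0$, and a concave function $\varphi_{\overline w}$ on $[0,a_{\overline w})$ such that \eqref{eq:KL-def} holds with $f(\overline w)=\zeta$ whenever $w\in B(\overline w,\varepsilon_{\overline w})$ and $\zeta<f(w)<\zeta+a_{\overline w}$. (A neighborhood may always be shrunk to a ball.)

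Since $\Omega$ is compact, finitely many half-radius balls $B(\overline w_i,\varepsilon_i/2)$, $i=1,\dots,p$, cover $\Omega$. Set
\[
\varepsilon:=\min_i\varepsilon_i/2,\qquad a:=\min_i a_i>0.
\]
Now take any $w$ with $\dist(w,\Omega)<\varepsilon$. Compactness gives a nearest point $w'\in\Omega$, so $\|w-w'\|<\varepsilon$. The point $w'$ lies in some $B(\overline w_i,\varepsilon_i/2)$, and therefore $\|w-\overline w_i\|<\varepsilon+\varepsilon_i/2\le\varepsilon_i$. Hence $w$ lies in the $i$-th KL neighborhood, and if also $\zeta<f(w)<\zeta+a\le\zeta+a_i$, then
\[
\varphi_i'(f(w)-\zeta)\,\dist(0,\partial f(w))\ge1.
\]

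It remains to find a single desingularizing function that works for every $i$. Define on $[0,a)$
\[
\varphi(t):=\sum_{i=1}^p\varphi_i(t).
\]
This is a finite sum of functions that are continuous and concave on $[0,a)$, vanish at $0$, and are $C^1$ with positive derivative on $(0,a)$. So $\varphi$ has all the same properties, and $\varphi'(t)=\sum_j\varphi_j'(t)\ge\varphi_i'(t)$ for every $i$, because every $\varphi_j'>0$. It follows that
\[
\varphi'(f(w)-\zeta)\,\dist(0,\partial f(w))\ge\varphi_i'(f(w)-\zeta)\,\dist(0,\partial f(w))\ge1,
\]
which is \eqref{eq:KL-def} in the uniform form claimed. (When $\dist(0,\partial f(w))=+\infty$, i.e.\ $\partial f(w)=\emptyset$, the inequality holds trivially under the usual convention.)

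No step here is a real obstacle. The only points needing care are the choice of half-radii in the covering, so that the $\varepsilon$-neighborhood of $\Omega$ lies inside the union of the original KL neighborhoods, and the observation that a sum of desingularizers is again admissible and dominates each summand's derivative. The hypothesis that $f$ is constant on $\Omega$ is what allows all the local inequalities to be stated relative to the one value $\zeta$.
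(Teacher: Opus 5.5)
Your proof is correct and is essentially the argument of \cite[Lemma~6]{BolteSabachTeboulle2014}, which the paper cites instead of giving a proof. That argument takes a finite subcover of $\Omega$ by KL balls, sets $a$ to the minimum of the local constants, and sets $\varphi$ to the sum of the local desingularizing functions; your half-radius choice is simply an explicit way to place the $\varepsilon$-neighborhood of $\Omega$ inside the union of the KL neighborhoods.
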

\begin{proof}
For the proof, we refer the reader to \cite[Lemma~6]{BolteSabachTeboulle2014}.
\end{proof}

\section{The S-FSPS algorithm and subsequential convergence}\label{sec:algorithm}
Let $\{\gamma_k\}_{k\geq0}$ be a nonincreasing sequence of positive numbers. Set
\begin{eqnarray}\label{eq:delta}
 \delta_k:=\chi\left(L_{\nabla h}+\frac{\sigma_A^2}{\gamma_k}\right),
 \qquad \chi>1.
\end{eqnarray}
We assume that
\begin{eqnarray}\label{eq:gamma-basic}
 \gamma_k\downarrow0,\;\;\mbox{and}\;\; \sum_{k=0}^{\infty}\gamma_k=\infty.
\end{eqnarray}

\noindent For $\gamma>0$, define
\begin{eqnarray}\label{eq:Psi}
 \Psi(x,z;\gamma):=\ip{Ax}{z}-g^*(z)+h(x)-\frac\gamma2\norm z^2.
\end{eqnarray}
\begin{algorithm}[S-FSPS~\cite{BotLiTao2025}]\label{alg:SFSPS}
Choose $x^0\in\Sset$, $z^0\in\R^s$, and $\theta_0>0$. Let $\{\gamma_k\}_{k\ge 0}$ satisfy~\eqref{eq:gamma-basic}, and let
$\delta_k$ be defined by~\eqref{eq:delta}. For $k\geq0$,
perform
\begin{align}
 y^{k+1}&\in\partial f(Kx^k),\label{eq:algorithm-y}\\
 x^{k+1}&=\proj_{\Sset}\!\left(x^k-
 \frac{A^*z^k+\nabla h(x^k)-\theta_kK^*y^{k+1}}{\delta_k}\right),
 \label{eq:algorithm-x}\\
 z^{k+1}&=\prox_{g^*/\gamma_k}(Ax^{k+1}/\gamma_k),
 \label{eq:algorithm-z}\\
 \theta_{k+1}&=\frac{\Psi(x^{k+1},z^{k+1};\gamma_k)}{f(Kx^{k+1})},
 \label{eq:algorithm-theta}
\end{align}
where $\Psi$ is defined in (\ref{eq:Psi}).
\end{algorithm}
All updates are well defined. Lemma~\ref{lem:moreau-identities} gives,
for $k\geq1$,
\begin{eqnarray*}\label{eq:moreau-recursion}
z^k=\nabla g_{\gamma_{k-1}}(Ax^k),\qquad
 \Psi_k(x):=g_{\gamma_{k-1}}(Ax)+h(x),\qquad
 \theta_k=\frac{\Psi_k(x^k)}{f(Kx^k)}.
\end{eqnarray*}
The restriction $k\geq1$ is necessary because $z^0$ is arbitrary.
We will use
\begin{eqnarray*}\label{eq:increments}
d_k:=x^{k+1}-x^k,\quad s_k:=\norm{d_k},\quad
 \Delta\gamma_k:=\gamma_{k-1}-\gamma_k,\quad
 \rho_k:=\frac{\Delta\gamma_k}{\gamma_{k-1}}\quad(k\geq1)
\end{eqnarray*}
and
\begin{eqnarray*}\label{eq:D-definition}
\nuf(x,y):=\ip{Kx}{y}-f^*(y),\qquad
 D_{k+1}:=\nuf(x^{k+1},y^{k+1}).
\end{eqnarray*}
The function $\nuf$ is understood to be $-\infty$ when $f^*(y)=+\infty$.
Equation (\ref{eq:algorithm-y}) gives the identity
\begin{eqnarray}\label{eq:Dhat}
 D_{k+1}=f(Kx^k)+\ip{Kd_k}{y^{k+1}}.
\end{eqnarray}

The following theorem collects the subsequential convergence
conclusions of~\cite[Theorem~2.6]{Tao2026}, together with an explicit
quotient-descent estimate and eventual bounds on the Fenchel denominator.
The conclusions recalled from that theorem remain valid when $K=0$
or $L_{\nabla h}=0$, as permitted here. We explain this extension
in the proof.

\begin{theorem}
\label{PriTheo2R}
Suppose Assumption \ref{ass:standing} holds. Define ${\cal V}^k := (x^k,y^k,z^k), \; k\geq 1$. Let $\Omega$ be the set of  cluster points of the sequence $\{{\cal V}^k\}_{k\ge 1}$ generated by Algorithm \ref{alg:SFSPS}.  Then the following statements hold:
\begin{itemize}
\item[(i)] For every $k \geq 1$, the following inequality holds:
\begin{eqnarray*}
& \ \Psi_{k+1}({ x}^{k+1})
+ \theta_k \left[f(K{ x}^k)-\nuf({ x}^{k+1},{ y}^{k+1})\right]
\leq \ \Psi_k({ x}^{k})
-\frac{\chi-1}{2\chi}\delta_k s_k^2 + \Xi^{k+1},
\end{eqnarray*}
 where
$
\Xi^{k+1} := \frac{\gamma_{k-1}-\gamma_{k}}{2}\|{ z}^{k+1}\|^2 \geq 0.
$

\item[(ii)] The sequence $\{{\cal V}^k\}_{k\ge 1}$ is bounded.
\item[(iii)] There exists an index $N_0\geq1$ such that $\theta_k\ge 0$ for all $k\geq N_0$.
\item[(iv)] There is an index $N_1\geq N_0$ such that $\theta_k>0$ for every
$k\geq N_1$.
For every $k\geq N_1$,
\begin{eqnarray}\label{eq:theta-quasidescent}
 \theta_{k+1}
 \leq \theta_k-\frac{\chi-1}{2\chi M}\delta_k s_k^2
 +\frac{\ell^2}{2m}(\gamma_{k-1}-\gamma_{k}).
\end{eqnarray}
Moreover, $\lim_{k\to+\infty}\theta_k=\overline{\theta}$
for some $\overline{\theta}\geq0$.
\item[(v)] We have $\liminf\limits_{k\to+\infty}\delta_{k}s_k=0.$
\end{itemize}
\begin{itemize}
\item[(vi)]
{For every $(\overline{ x},{\overline{ y}},{\overline{ z}}) \in \Omega$, we have }
$F(\overline x)+\iota_{\cal S}(\overline x)={\overline{ \theta}},$
{where $\overline{\theta}$ is defined in part~(iv)}.
\item[(vii)]
Let \(\{{ x}^{k_j}\}_{j\ge 0}\) be a subsequence of $\{x^k\}_{k\ge 0}$ such that
$
\lim\limits_{j \to +\infty} \delta_{k_j} s_{k_j} = 0$.  Then every cluster point of $\{x^{k_j}\}_{j\ge 0}$ is a limiting lifted stationary point of \eqref{eq:problem}.
\item[(viii)]  $
    \lim_{k\to+\infty}s_k=0,$
    and
$\displaystyle \lim_{k\to+\infty}\frac{\nuf(x^{k+1},y^{k+1})}{f(K{x}^{k})}=1.$ 
Furthermore, there exists an index $N_2\geq N_1$ such that
\begin{eqnarray}\label{K1}
0<m\le \nuf(x^k,y^k) \le f(K{ x}^k)\le M \quad \forall k \geq N_2,
\end{eqnarray}
where $m$ and $M$ are the bounds from Lemma~\ref{lem:denominator-bounds}.
\end{itemize}
\end{theorem}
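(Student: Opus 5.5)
Most conclusions (ii), (iii), (v)–(vii) and the first half of (viii) are taken from \cite[Theorem~2.6]{Tao2026}. The plan is therefore threefold: re-derive (i) directly, derive (iv) from (i) by dividing by the denominator, and check that nothing in the cited argument breaks when $K=0$ or $L_{\nabla h}=0$.

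\emph{Part (i).} Since $\delta_k\geq\chi\sigma_A^2/\gamma_k>\chi\bigl(L_{\nabla h}+\sigma_A^2/\gamma_{k-1}\bigr)/\chi$ and $\chi>1$, the stepsize $1/\delta_k$ is below the reciprocal of the Lipschitz constant of $\nabla\Psi_k$. That constant is $L_{\nabla h}+\sigma_A^2/\gamma_{k-1}$ by Lemma~\ref{lem:moreau-identities}(iv), and $\gamma_k\le\gamma_{k-1}$.
\begin{itemize}
\item Step 1: write the descent lemma for $\Psi_k$ at $x^{k+1}$, using $z^k=\nabla g_{\gamma_{k-1}}(Ax^k)$.
\item Step 2: use the projection characterization of $x^{k+1}$ tested against $x^k\in\Sset$, which gives $\langle A^*z^k+\nabla h(x^k)-\theta_kK^*y^{k+1},d_k\rangle\le-\delta_k s_k^2$.
\item Step 3: combine. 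Together with the Lipschitz bound $\tfrac12(L_{\nabla h}+\sigma_A^2/\gamma_{k-1})s_k^2\le\tfrac{1}{2\chi}\delta_k s_k^2$, this yields
\[
\Psi_k(x^{k+1})\le\Psi_k(x^k)+\theta_k\langle Kd_k,y^{k+1}\rangle-\tfrac{\chi-1}{2\chi}\delta_ks_k^2 .
\]
Using \eqref{eq:Dhat}, $\theta_k\langle Kd_k,y^{k+1}\rangle=-\theta_k[f(Kx^k)-\nuf(x^{k+1},y^{k+1})]$.
\item Step 4: pass from $\Psi_k(x^{k+1})$ to $\Psi_{k+1}(x^{k+1})$. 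The difference $g_{\gamma_k}(Ax^{k+1})-g_{\gamma_{k-1}}(Ax^{k+1})$ is obtained by integrating \eqref{eq:gamma-derivative}, and is bounded by $\tfrac{\gamma_{k-1}-\gamma_k}{2}\|z^{k+1}\|^2=\Xi^{k+1}$ using monotonicity of $t\mapsto\|\nabla g_t\|$.
\end{itemize}
If that monotonicity is unavailable, the crude bound $\tfrac{\ell^2}{2}(\gamma_{k-1}-\gamma_k)$ from Lemma~\ref{lem:uniform-moreau}(v) suffices for (iv) anyway.

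\emph{Part (iv).} For $k\ge N_1$ we have $\theta_k>0$. Convexity of $f$ with $y^{k+1}\in\partial f(Kx^k)$ gives $f(Kx^{k+1})\ge\nuf(x^{k+1},y^{k+1})$. Hence the left side of (i) is at least $\Psi_{k+1}(x^{k+1})-\theta_kf(Kx^{k+1})+\theta_kf(Kx^k)$. Subtracting $\Psi_k(x^k)=\theta_kf(Kx^k)$ from both sides gives
\[
(\theta_{k+1}-\theta_k)f(Kx^{k+1})\le-\tfrac{\chi-1}{2\chi}\delta_ks_k^2+\tfrac{\ell^2}{2}(\gamma_{k-1}-\gamma_k).
\]
The sign issue is handled by cases:
\begin{itemize}
\item If $\theta_{k+1}\ge\theta_k$, divide by $f(Kx^{k+1})\in[m,M]$ and bound the right side using $m$ on the positive term. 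The negative term is then dominated by what is needed, because $\theta_{k+1}-\theta_k\ge0$ forces $\tfrac{\chi-1}{2\chi}\delta_k s_k^2\le\tfrac{\ell^2}{2}\Delta\gamma_k$, so the claimed inequality holds.
\item If $\theta_{k+1}<\theta_k$, divide and use $1/f\ge1/M$ on the descent term and $1/f\le1/m$ on the error term.
\end{itemize}
Convergence of $\theta_k$ then follows because $\theta_k\ge0$ is quasi-decreasing with summable errors $\sum(\gamma_{k-1}-\gamma_k)\le\gamma_0$.

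\emph{Part (viii) and the degenerate cases.} The first limit, $s_k\to0$, follows from (iv): summing gives $\sum\delta_ks_k^2<\infty$, and $\delta_k\ge\chi\sigma_A^2/\gamma_0>0$ since $A\ne0$. The ratio limit follows from $|f(Kx^k)-\nuf(x^{k+1},y^{k+1})|=|\langle Kd_k,y^{k+1}\rangle|\le\sigma_KB_ys_k\to0$ together with $f(Kx^k)\ge m$.

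For \eqref{K1}:
\begin{itemize}
\item The upper inequality $\nuf(x^k,y^k)\le f(Kx^k)$ is Fenchel--Young, and $f(Kx^k)\le M$ is Lemma~\ref{lem:denominator-bounds}.
\item The lower bound needs $m$ strictly below $\inf f(K\Sset)$. If Lemma~\ref{lem:denominator-bounds} only gives $m<f$, I would use compactness to get $\min f(K\Sset)>m$, and then $s_k\to0$ makes $\nuf(x^k,y^k)=f(Kx^{k-1})+\langle Kd_{k-1},y^k\rangle>m$ eventually.
\end{itemize}

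For $K=0$ or $L_{\nabla h}=0$, the only places the cited proof might divide by $\sigma_K$ or $L_{\nabla h}$ are Lipschitz-type bounds. These become trivially true, with $\langle Kd_k,y\rangle=0$, or rely on the $\sigma_A^2/\gamma_k$ term of $\delta_k$, which is positive. The main obstacle I expect is the sign bookkeeping in (iv) and confirming the $\Xi^{k+1}$ bound with $\|z^{k+1}\|$ rather than $\ell$.
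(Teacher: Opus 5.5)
Your proposal is essentially correct. For the parts the paper proves itself, it follows the paper step for step: the quasi-descent inequality in (iv), $s_k\to0$ by telescoping $\theta_k+\frac{\ell^2}{2m}\gamma_{k-1}$, the ratio limit, and the lower bound in \eqref{K1} via $\min_{x\in\Sset}f(Kx)>m$.

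The genuine difference is that you re-derive (i), which the paper imports from \cite[Theorem~2.6]{Tao2026}. Your derivation is sound and even gives the larger coefficient $\frac{2\chi-1}{2\chi}$. It combines the descent lemma for $\Psi_k$, the projection inequality $\langle A^*z^k+\nabla h(x^k)-\theta_kK^*y^{k+1},d_k\rangle\le-\delta_ks_k^2$, and \eqref{eq:Dhat}.

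In Step 4 you rely on monotonicity of $t\mapsto\|\nabla g_t(u)\|$. This is true: in the proof of Lemma~\ref{lem:moreau-parameter-gradient}, writing $p-\widehat p=-\gamma(z-\widehat z)+(\widehat\gamma-\gamma)\widehat z$ gives $\langle z,\widehat z\rangle\ge\|\widehat z\|^2$. You can also bypass it. Since $z^{k+1}$ maximizes $w\mapsto\langle Ax^{k+1},w\rangle-g^*(w)-\frac{\gamma_k}{2}\|w\|^2$, you get $\Psi_{k+1}(x^{k+1})=\Psi(x^{k+1},z^{k+1};\gamma_{k-1})+\Xi^{k+1}\le\Psi_k(x^{k+1})+\Xi^{k+1}$ directly. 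Your fallback to the $\frac{\ell^2}{2}$ bound would not give (i) as stated, so one of these two arguments is needed.

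A few small repairs.
\begin{itemize}
\item The first inequality chain in your part (i) is garbled. What you need, and later use, is $\delta_k\ge\chi(L_{\nabla h}+\sigma_A^2/\gamma_{k-1})$, which follows from $\gamma_k\le\gamma_{k-1}$.
\item In (iv) you assert $\theta_k>0$ without proof. The paper obtains it from Lemma~\ref{lem:uniform-moreau}(iii) and Assumption~\ref{ass:standing}(e): $\Psi_k(x^k)\ge\alpha-\frac{\ell^2}{2}\gamma_{k-1}\ge\alpha/2$ for large $k$. For the inequality itself, $\theta_k\ge0$ from (iii) would already suffice.
\item Your case split on the sign of $\theta_{k+1}-\theta_k$ is unnecessary. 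Divide by $f(Kx^{k+1})>0$ and bound the two terms separately, using $f(Kx^{k+1})\le M$ for the descent term and $f(Kx^{k+1})>m$ for the error term.
\item (v) need not be taken on citation. As in the paper, it follows at once from $\sum_k\delta_ks_k^2<\infty$, which you already have, and $\sum_k1/\delta_k=\infty$. The latter holds because $1/\delta_k\ge\gamma_k/(\chi(L_{\nabla h}\gamma_0+\sigma_A^2))$, even when $L_{\nabla h}=0$.
\end{itemize}
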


\begin{proof}
Since $A\ne0$, the definition~\eqref{eq:delta} gives
$$
 \delta_k\geq\frac{\chi\sigma_A^2}{\gamma_0}>0,\;\;
 \frac1{\delta_k}
 =\frac{\gamma_k}{\chi(L_{\nabla h}\gamma_k+\sigma_A^2)}
 \geq\frac{\gamma_k}{\chi(L_{\nabla h}\gamma_0+\sigma_A^2)}.
$$
Thus $\sum_k\gamma_k=\infty$ implies $\sum_k1/\delta_k=\infty$.
These bounds remain valid when $L_{\nabla h}=0$. Moreover, neither
the descent argument nor the stationarity passage in
\cite[Theorem~2.6]{Tao2026} requires $K\ne0$; in particular, no
division by $\norm K$ is used.
We prove~\eqref{eq:theta-quasidescent} and part (viii) below,
and also give the argument for part (v).
Lemma~\ref{lem:uniform-moreau} gives
\[
 \Psi_k(x^k)\geq\alpha-\frac{\ell^2}{2}\gamma_{k-1}
 \geq\frac\alpha2
\]
for all sufficiently large $k$, so $\theta_k>0$.
Using $\Psi_k(x^k)=\theta_k f(Kx^k)$, the inequality in part (i),
and $\|z^{k+1}\|\leq\ell$, we obtain
\begin{eqnarray}\label{eq:basic-residual-descent}
 \Psi_{k+1}(x^{k+1})-\theta_k\nuf(x^{k+1},y^{k+1})
 \leq-\frac{\chi-1}{2\chi}\delta_ks_k^2
       +\frac{\ell^2}{2}(\gamma_{k-1}-\gamma_k).
\end{eqnarray}
Since $\nuf(x^{k+1},y^{k+1})\leq f(Kx^{k+1})$ and $\theta_k>0$,
dividing by $f(Kx^{k+1})$ and using the bounds
$m<f(Kx^{k+1})\leq M$ yields~\eqref{eq:theta-quasidescent}.

To prove (viii), set
$$
 T_k:=\theta_k+\frac{\ell^2}{2m}\gamma_{k-1},
 \qquad a:=\frac{\chi-1}{2\chi M}.
$$
Equation~\eqref{eq:theta-quasidescent} gives
$T_{k+1}\leq T_k-a\delta_ks_k^2$ for all sufficiently large $k$.
The sequence $T_k$ is eventually bounded below by zero.
Telescoping therefore yields $\sum_k\delta_ks_k^2<\infty$.
Since $\inf_k\delta_k>0$, we obtain $s_k\to0$.
To verify part (v), suppose that $\liminf_k\delta_ks_k>0$.
Then, for some $c>0$ and all sufficiently large $k$,
$\delta_ks_k^2\geq c^2/\delta_k$, contradicting
$\sum_k\delta_ks_k^2<\infty$ and $\sum_k1/\delta_k=\infty$.
Hence $\liminf_k\delta_ks_k=0$.
By~\eqref{eq:Dhat},
$$
 \left|\frac{D_{k+1}}{f(Kx^k)}-1\right|
 \leq\frac{\sigma_KB_y}{m}s_k\longrightarrow0.
$$
Lemma \ref{lem:denominator-bounds}(ii), \eqref{eq:Dhat}, and the fact that
$f\circ K$ is $\sigma_KB_y$-Lipschitz on $\Sset$ imply that
\begin{eqnarray}\label{add}
 |D_{k+1}-f(Kx^{k+1})|
 \leq 2\sigma_KB_y s_k\longrightarrow0.
\end{eqnarray}
Moreover, the Fenchel inequality gives
\[
 D_{k+1}\leq f(Kx^{k+1})\leq M.
\]
Let
\[
 m_f:=\min_{x\in\Sset} f(Kx).
\]
By compactness and Lemma~\ref{lem:denominator-bounds}(i), $m_f>m$.
Hence, by~\eqref{add}, for all sufficiently large $k$,
\[
 D_{k+1}\geq f(Kx^{k+1})-\frac{m_f-m}{2}
 \geq\frac{m_f+m}{2}>m.
\]
This proves the bounds~\eqref{K1}.

\end{proof}
\section{Approach to global convergence}\label{sec:potential}
Fix an integer $q\geq2$, choose $R>B_y$, and choose $\overline\tau>0$ and
$\lambda$ so that
$
 \ell^2\overline\tau^q\leq\alpha,\;
 \lambda\geq\lambda_0=\frac{\ell^2}{2m}.
$
For $x\in\Sset$ and $\tau\geq0$, define
\begin{eqnarray*}\label{eq:G-lift}
\Gcal_q(x,\tau):=
 \begin{cases}
  g_{\tau^q}(Ax)+h(x),&\tau>0,\\
  g(Ax)+h(x),&\tau=0.
 \end{cases}
\end{eqnarray*}
Items~(iii)--(iv) of Lemma~\ref{lem:uniform-moreau} imply that
$\Gcal_q$ is continuous on $\Sset\times[0,\overline{\tau}]$ and
\begin{eqnarray*}\label{eq:G-positive}
\Gcal_q(x,\tau)\geq\alpha/2
 \qquad(x\in\Sset,\ 0\leq\tau\leq\overline\tau).
\end{eqnarray*}

For every $(x,\tau)\in\Sset\times(0,\overline\tau)$, the defining
expression for $\Gcal_q$ extends to a $C^1$ function on a neighborhood
of $(x,\tau)$, with
\begin{eqnarray}\label{eq:G-derivatives}
\nabla_x\Gcal_q(x,\tau)=A^*\nabla g_{\tau^q}(Ax)+\nabla h(x),
 \qquad
 \partial_\tau\Gcal_q(x,\tau)
 =-\frac q2\tau^{q-1}\norm{\nabla g_{\tau^q}(Ax)}^2.
\end{eqnarray}
We next consider two settings and derive subgradient bounds for the
corresponding potential functions.
\subsection{The Fenchel potential}
For comparison with earlier subdifferential formulas, we recall the
following notion of relative calmness. This condition is not assumed
in the analysis below.
We say that $f^*$ is \emph{calm at $y$ relative to its effective domain}
if $y\in\dom f^*$ and there exist $L_y\geq0$ and $\varepsilon_y>0$
such that
\begin{eqnarray}\label{eq:relative-calmness}
 |f^*(v)-f^*(y)|\leq L_y\norm{v-y}
 \quad\text{for every }v\in\dom f^*
 \text{ with }\norm{v-y}<\varepsilon_y.
\end{eqnarray}
This is the convention in~\cite[Definition~2.1]{LiShenZhangZhou2022}.
The constants may depend on $y$. Requiring the same estimate for
every $v$ in a full neighborhood is stronger: it requires $f^*$
to be finite throughout that neighborhood. The latter convention
is used in~\cite[Definition~2 and Lemma~2]{HTX}.

Define the truncated domain
\begin{eqnarray}\label{eq:C-definition}
 \Ccal_q^{(1)}:=\left\{(x,y,\tau):
 x\in\Sset,\ y\in\dom f^*,\ \norm y\leq R,\ 0\leq\tau\leq\overline\tau,
 \ \nuf(x,y)\geq m/2\right\}
\end{eqnarray}
and the extended-real-valued function
\begin{eqnarray}\label{eq:E-definition}
\Ecal_q^{(1)}(x,y,\tau):=
 \begin{cases}
 \displaystyle\frac{\Gcal_q(x,\tau)+\iota_{\mathcal S}(x)}{\nuf(x,y)}+\lambda\tau^q,
       &(x,y,\tau)\in\Ccal_q^{(1)},\\[1ex]
 +\infty,&\text{otherwise}.
 \end{cases}
\end{eqnarray}

\begin{proposition}
\label{prop:E-subgradient}
The set $\Ccal_q^{(1)}$ is nonempty and compact, and
$\Ecal_q^{(1)}$ is proper and lower semicontinuous.
\end{proposition}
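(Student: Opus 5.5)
Nonemptiness will come from the algorithm itself. By Theorem~\ref{PriTheo2R}(viii), for every $k\geq N_2$ the triple $(x^k,y^k,0)$ satisfies $x^k\in\Sset$ and $\nuf(x^k,y^k)\geq m>m/2$. In particular $f^*(y^k)<\infty$, so $y^k\in\dom f^*$. For $k\geq1$ we have $y^k\in\partial f(Kx^{k-1})$ with $x^{k-1}\in\Sset$, so Lemma~\ref{lem:denominator-bounds}(ii) gives $\norm{y^k}\leq B_y<R$. Thus $(x^k,y^k,0)\in\Ccal_q^{(1)}$. If one prefers to avoid the algorithm, the same works with any $x\in\Sset$, any $y\in\partial f(Kx)$, and the Fenchel equality $\nuf(x,y)=f(Kx)>m$.

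For compactness, boundedness is immediate, since $\Sset$ is compact and $\norm y\leq R$, $0\leq\tau\leq\overline\tau$. For closedness, the key observation is that $(x,y)\mapsto\nuf(x,y)=\ip{Kx}{y}-f^*(y)$, with value $-\infty$ off $\dom f^*$, is upper semicontinuous on $\R^n\times\R^p$. This holds because $f^*$ is proper, lsc, and convex (the conjugate of a proper convex function) and the bilinear term is continuous. Hence the superlevel set $\{\nuf\geq m/2\}$ is closed and automatically contained in $\R^n\times\dom f^*$. So the explicit condition $y\in\dom f^*$ is implied and causes no closedness issue. Intersecting with the closed sets defined by the remaining constraints gives a closed bounded set, hence a compact one.

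For properness, $\Ecal_q^{(1)}$ is finite at any point of $\Ccal_q^{(1)}$, since there $\Gcal_q$ is finite, $\nuf\geq m/2>0$, and $\iota_\Sset=0$. It never takes the value $-\infty$, because $\Gcal_q\geq\alpha/2>0$ on $\Sset\times[0,\overline\tau]$.

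Lower semicontinuity is the main step. Take $w^j=(x^j,y^j,\tau^j)\to w=(x,y,\tau)$ with $\liminf_j \Ecal_q^{(1)}(w^j)<\infty$, and pass to a subsequence in $\Ccal_q^{(1)}$ realizing the liminf. Closedness gives $w\in\Ccal_q^{(1)}$. The numerator $\Gcal_q(x,\tau)$ is continuous on $\Sset\times[0,\overline\tau]$, as noted after its definition; this uses Lemma~\ref{lem:uniform-moreau}(iii)--(iv) to get continuity at $\tau=0$. The numerator is also positive. The denominator satisfies $m/2\leq\nuf(x,y)$ and $\limsup_j\nuf(x^j,y^j)\leq\nuf(x,y)$ by upper semicontinuity. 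Since $t\mapsto a/t$ is decreasing for $a>0$, $t>0$, we obtain
\[
\liminf_j\frac{\Gcal_q(x^j,\tau^j)}{\nuf(x^j,y^j)}\geq\frac{\Gcal_q(x,\tau)}{\limsup_j\nuf(x^j,y^j)}\geq\frac{\Gcal_q(x,\tau)}{\nuf(x,y)}.
\]
The term $\lambda\tau^q$ is continuous. Together these give $\liminf_j\Ecal_q^{(1)}(w^j)\geq\Ecal_q^{(1)}(w)$.

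The only delicate point is that $f^*$ may fail to be continuous, or even finite near the limit $y$; this is the calmness issue the paper wants to avoid. Mere lower semicontinuity of $f^*$ supplies exactly the upper semicontinuity of the denominator, and positivity of the numerator is what makes that the right direction.
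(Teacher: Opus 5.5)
Your proposal is correct and follows essentially the same argument as the paper. The paper uses the same three steps: nonemptiness via a point $(x,y,0)$ with $y\in\partial f(Kx)$, the Fenchel equality $\nuf(x,y)=f(Kx)>m$, and $\norm{y}\leq B_y<R$; compactness from upper semicontinuity of $\nuf$ with the $-\infty$ convention; and lower semicontinuity from continuity and positivity of $\Gcal_q$ together with $\nuf\geq m/2$ and $\limsup_j\nuf(x^j,y^j)\leq\nuf(x,y)$. Your primary route to nonemptiness through the iterates and Theorem~\ref{PriTheo2R}(viii) is also valid, but it is unnecessary, and your fallback direct argument is exactly the one the paper uses.
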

\begin{proof}
Take $x_0\in\Sset$. Since
$Kx_0\in\operatorname{int}(\dom f)$, choose
$y_0\in\partial f(Kx_0)$. Then
\[
 \nuf(x_0,y_0)=f(Kx_0)>m,
 \qquad
 \norm{y_0}\leq B_y<R.
\]
Thus $(x_0,y_0,0)\in\Ccal_q^{(1)}$, proving that
$\Ccal_q^{(1)}$ is nonempty and that $\Ecal_q^{(1)}$ is proper.
With the convention $\nuf(x,y)=-\infty$ when $f^*(y)=+\infty$,
the function $\nuf$ is upper semicontinuous. Consequently,
$\Ccal_q^{(1)}$ is a closed subset of the compact set
$\Sset\times\clB_R\times[0,\overline\tau]$, and hence is compact.
Let $U_j=(x_j,y_j,\tau_j)\to U=(x,y,\tau)$. If
$\liminf_{j\to\infty}\Ecal_q^{(1)}(U_j)=+\infty$,
the lower-semicontinuity inequality is immediate.
Otherwise, pass to a subsequence, without relabeling, such that
$\Ecal_q^{(1)}(U_j)<+\infty$ for every $j$ and these values
converge to the original lower limit.
Then $U_j\in\Ccal_q^{(1)}$, and the closedness of
$\Ccal_q^{(1)}$ gives $U\in\Ccal_q^{(1)}$.
Since
$
 \Gcal_q(x_j,\tau_j)\to\Gcal_q(x,\tau)>0,
 \;
 \nuf(x_j,y_j)\geq\frac m2,
$
and
$
 \limsup_{j\to\infty}\nuf(x_j,y_j)\leq\nuf(x,y),
$
we obtain
$
 \liminf_{j\to\infty}
 \frac{\Gcal_q(x_j,\tau_j)}{\nuf(x_j,y_j)}
 \geq
 \frac{\Gcal_q(x,\tau)}{\nuf(x,y)}.
$
Because $\lambda\tau_j^q\to\lambda\tau^q$, it follows that
$
 \liminf_{j\to\infty}\Ecal_q^{(1)}(U_j)
 \geq\Ecal_q^{(1)}(U).
$
Thus $\Ecal_q^{(1)}$ is lower semicontinuous.
\end{proof}

\begin{proposition}[Subgradients of the Fenchel potential]
\label{prop:E-subgradient2}
Suppose Assumption~\ref{ass:standing} holds.
\begin{itemize}
\item[(i)] Let $(x,y,\tau)\in\Ccal_q^{(1)}$ satisfy
$0<\tau<\overline\tau$ and $D:=\nuf(x,y)>m/2$. Set
\[
 G:=\Gcal_q(x,\tau),\qquad r:=G/D,\qquad
 z_\tau(x):=\nabla g_{\tau^q}(Ax).
\]
Then, for every $\zeta\in\partial f^*(y)$ and
$n\in N_{\Sset}(x)$,
\begin{equation}\label{eq:E-subgradient}
 \begin{pmatrix}
 \displaystyle\frac{A^*z_\tau(x)+\nabla h(x)-rK^*y}{D}+n\\[1.5ex]
 \displaystyle\frac{G}{D^2}(\zeta-Kx)\\[1.5ex]
 \displaystyle q\tau^{q-1}
       \left(\lambda-\frac{\norm{z_\tau(x)}^2}{2D}\right)
 \end{pmatrix}
 \in\whpartial\Ecal_q^{(1)}(x,y,\tau)
 \subseteq\partial\Ecal_q^{(1)}(x,y,\tau).
\end{equation}
\item[(ii)] Let $x\in\Sset$, $y\in\partial f(Kx)$,
and $z\in\partial g(Ax)$. At $\tau=0$, set
\[
 G:=g(Ax)+h(x),\qquad
 D:=f(Kx)=\nuf(x,y),\qquad r:=G/D.
\]
Then, for every $n\in N_{\Sset}(x)$,
\begin{equation}\label{eq:E-zero-subgradient2}
 \begin{pmatrix}
 \displaystyle\frac{A^*z+\nabla h(x)-rK^*y}{D}+n\\[1.5ex]
 0\\[1.5ex]0
 \end{pmatrix}
 \in\whpartial\Ecal_q^{(1)}(x,y,0)
 \subseteq\partial\Ecal_q^{(1)}(x,y,0).
\end{equation}
In particular, $(x,y,0)\in\dom\partial\Ecal_q^{(1)}$.
\end{itemize}
\end{proposition}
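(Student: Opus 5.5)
The proof uses a smooth lower-support construction. For each part we build a function $\Phi$ that minorizes $\Ecal_q^{(1)}$ near the base point, agrees with it there, and is differentiable there with gradient equal to the displayed vector. Then $\Ecal_q^{(1)}\ge\Phi$ locally with equality at the base point, so every Fréchet subgradient of $\Phi$ is one of $\Ecal_q^{(1)}$. The inclusion $\whpartial\subseteq\partial$ is automatic from the definitions. The obstacle is that $\nuf$ involves $-f^*(y)$, which may be nonsmooth or even infinite off $\dom f^*$. We therefore never differentiate $f^*$; we only use the subgradient inequality.

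\textbf{Part (i).} Fix $\zeta\in\partial f^*(y)$. Convexity gives $f^*(v)\ge f^*(y)+\langle\zeta,v-y\rangle$ for all $v$. Hence
\[
\nuf(u,v)\le \widetilde D(u,v):=\langle Ku,v\rangle-f^*(y)-\langle\zeta,v-y\rangle ,
\]
which is a smooth (bilinear plus affine) majorant with $\widetilde D(x,y)=D>m/2$.

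For $(u,v,t)$ near $(x,y,\tau)$ we have $t\in(0,\overline\tau)$, and we work in two regimes.
\begin{itemize}
\item If $(u,v,t)\in\Ccal_q^{(1)}$, then $u\in\Sset$ and $\nuf(u,v)\ge m/2>0$. Also $\Gcal_q(u,t)\ge\alpha/2>0$ and $\widetilde D(u,v)\ge\nuf(u,v)>0$, so
\[
\Ecal_q^{(1)}(u,v,t)=\frac{\Gcal_q(u,t)}{\nuf(u,v)}+\lambda t^q\ \ge\ \frac{\Gcal_q(u,t)}{\widetilde D(u,v)}+\lambda t^q=:\Phi(u,v,t).
\]
\item Off $\Ccal_q^{(1)}$ the potential equals $+\infty$, and the inequality holds trivially.
\end{itemize}
The indicator term is handled via $n\in N_\Sset(x)$. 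Using $\Phi+\langle n,u-x\rangle$ does not preserve the minorization, since $\langle n,u-x\rangle\le0$ only on $\Sset$. Instead we add the Fréchet subgradient $n$ of $\iota_\Sset$ at the level of the liminf quotient. This is legitimate because $\Ecal_q^{(1)}=+\infty$ off $\Sset$, and on $\Sset$ we have $\langle n,u-x\rangle\le0$.

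The function $\Phi$ is $C^1$ near the base point by \eqref{eq:G-derivatives}, and its gradient follows by the quotient rule:
\begin{itemize}
\item in $u$: $\bigl(A^*z_\tau+\nabla h - rK^*y\bigr)/D$;
\item in $v$: $-G(Kx-\zeta)/D^2=\frac{G}{D^2}(\zeta-Kx)$;
\item in $t$: $-\frac q2\tau^{q-1}\|z_\tau\|^2/D+q\lambda\tau^{q-1}$.
\end{itemize}
These match \eqref{eq:E-subgradient}.

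\textbf{Part (ii).} At $\tau=0$ we again take $\widetilde D$, now with $\zeta:=Kx\in\partial f^*(y)$. This choice is valid by Fenchel--Young, since $y\in\partial f(Kx)$. With it the $v$-gradient vanishes, and $D=f(Kx)>m$, so $(x,y,0)\in\Ccal_q^{(1)}$.

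For the numerator, $\Gcal_q$ is not differentiable in $\tau$ at $0$, so we use a lower bound. By Lemma~\ref{lem:uniform-moreau}(iii) and convexity of $g$, for $u\in\Sset$ and $t>0$,
\[
g_{t^q}(Au)\ \ge\ g(Au)-\tfrac{\ell^2}{2}t^q\ \ge\ g(Ax)+\langle z,A(u-x)\rangle-\tfrac{\ell^2}{2}t^q .
\]
The same bound holds at $t=0$. Define
\[
\widetilde G(u,t):=g(Ax)+\langle A^*z,u-x\rangle+h(u)-\tfrac{\ell^2}{2}t^q\ \le\ \Gcal_q(u,t),
\]
which is smooth. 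Since $\widetilde G>0$ near the base point (because $G\ge\alpha$), we get
\[
\Ecal_q^{(1)}\ \ge\ \frac{\widetilde G}{\widetilde D}+\lambda t^q\qquad\text{on }\Ccal_q^{(1)}\text{ near }(x,y,0).
\]
Only $t\ge0$ matters, since $\Ecal_q^{(1)}=+\infty$ for $t<0$. The right-hand side is differentiable, and its $t$-derivative at $0$ is $0$ because $q\ge2$. This yields \eqref{eq:E-zero-subgradient2}, with $n$ handled as in part (i).

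The main point needing care is the monotonicity step $G/\nuf\ge \widetilde G/\widetilde D$. It requires positivity of the numerator and of both denominators near the base point. This holds because $\Gcal_q\ge\alpha/2$ (or $\widetilde G$ near $G\ge\alpha$), $\nuf\ge m/2$ on $\Ccal_q^{(1)}$, and $\widetilde D\ge\nuf$.
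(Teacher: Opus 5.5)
Your proposal is correct and follows essentially the same smooth lower-support argument as the paper. Your $\widetilde D$ is exactly the paper's $B_\zeta$, with $\zeta=Kx$ in part (ii), and the normal-cone term enters through the first-order remainder as in the paper; in part (ii), membership $(x,y,0)\in\Ccal_q^{(1)}$ also needs $\norm{y}\leq B_y<R$ and $y\in\dom f^*$, which you did not state but which follow immediately. The only variation is the numerator bound in part (ii): you obtain $-\frac{\ell^2}{2}t^q$ from Lemma~\ref{lem:uniform-moreau}(iii), which holds for $u\in\Sset$ and that suffices, while the paper derives the globally valid $-\frac{t^q}{2}\norm{z}^2$ directly from the Moreau definition; both have zero $t$-derivative at $t=0$ because $q\geq2$.
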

\begin{proof}
(i) Fix $\zeta\in\partial f^*(y)$. The convex subgradient inequality gives
$f^*(v)\geq f^*(y)+\ip{\zeta}{v-y}$, and therefore
\begin{align}
 \nuf(u,v)
 &\leq D+\ip{K^*y}{u-x}+\ip{Kx-\zeta}{v-y}
                   +\ip{K(u-x)}{v-y}\notag\\
 &=:B_\zeta(u,v).
 \label{eq:denominator-support}
\end{align}
Since $B_\zeta(x,y)=D>0$ and $\Gcal_q(x,\tau)>0$, both
$B_\zeta(u,v)$ and $g_{t^q}(Au)+h(u)$ are positive on a sufficiently
small open neighborhood of $U_0=(x,y,\tau)$ on which $t>0$.
On this neighborhood, define the $C^1$ function
\[
 \Phi_\zeta(u,v,t)
 :=\frac{g_{t^q}(Au)+h(u)}{B_\zeta(u,v)}+\lambda t^q.
\]
For $U=(u,v,t)$ in this neighborhood and in the effective domain
of $\Ecal_q^{(1)}$,
we have $u\in\Sset$, so $\iota_{\Sset}(u)=0$.
Positivity and $\nuf(u,v)\leq B_\zeta(u,v)$ therefore give
\[
 \Ecal_q^{(1)}(U)
 =\frac{g_{t^q}(Au)+h(u)}{\nuf(u,v)}+\lambda t^q
 \geq\Phi_\zeta(U).
\]
The inequality also holds outside the effective domain, where
$\Ecal_q^{(1)}(U)=+\infty$, and equality holds at $U_0$.
Thus $\Phi_\zeta$ is a smooth lower support of $\Ecal_q^{(1)}$
at $U_0$. Differentiation using~\eqref{eq:G-derivatives} and
\eqref{eq:denominator-support} yields
\[
 \nabla\Phi_\zeta(U_0)=
 \begin{pmatrix}
 \displaystyle\frac{A^*z_\tau(x)+\nabla h(x)-rK^*y}{D}\\[1.5ex]
 \displaystyle\frac{G}{D^2}(\zeta-Kx)\\[1.5ex]
 \displaystyle q\tau^{q-1}
       \left(\lambda-\frac{\norm{z_\tau(x)}^2}{2D}\right)
 \end{pmatrix}.
\]
To incorporate the constraint normal, fix $n\in N_{\Sset}(x)$
and set $v_n:=\nabla\Phi_\zeta(U_0)+(n,0,0)$.
Since $\Phi_\zeta$ is $C^1$, its first-order remainder satisfies
\begin{equation}\label{eq:support-remainder}
 R_\Phi(U):=\Phi_\zeta(U)-\Phi_\zeta(U_0)
                 -\ip{\nabla\Phi_\zeta(U_0)}{U-U_0},
 \qquad
 \frac{R_\Phi(U)}{\norm{U-U_0}}\longrightarrow0.
\end{equation}
For every sufficiently close $U$ in the effective domain,
$u\in\Sset$ and hence $\ip n{u-x}\leq0$. Consequently,
\[
 \begin{aligned}
 &\Ecal_q^{(1)}(U)-\Ecal_q^{(1)}(U_0)
                -\ip{v_n}{U-U_0}\\
 &\qquad\geq R_\Phi(U)-\ip n{u-x}
 \geq R_\Phi(U).
 \end{aligned}
\]
Outside the effective domain the same inequality holds trivially.
Dividing by $\norm{U-U_0}$ and taking the lower limit as
$U\to U_0$, $U\neq U_0$, proves
$v_n\in\whpartial\Ecal_q^{(1)}(U_0)$ by the definition of the
Fr\'echet subdifferential. This is~\eqref{eq:E-subgradient}.
The inclusion in the limiting subdifferential follows from
$\whpartial\Ecal_q^{(1)}\subseteq\partial\Ecal_q^{(1)}$.

(ii) The denominator bounds and $R>B_y$ imply
$(x,y,0)\in\Ccal_q^{(1)}$. For $t>0$, the Moreau definition and
$z\in\partial g(Ax)$ yield
\begin{align}
 g_{t^q}(Au)
 &\geq\inf_w\left\{
 g(Ax)+\ip z{w-Ax}+\frac{\norm{w-Au}^2}{2t^q}\right\}\notag\\
 &=g(Ax)+\ip{A^*z}{u-x}-\frac{t^q}{2}\norm z^2.
 \label{eq:zero-envelope-support}
\end{align}
The last equality holds because the minimum is attained at $w=Au-t^qz$.
With $g_0:=g$, inequality~\eqref{eq:zero-envelope-support} also holds
at $t=0$ by the convex subgradient inequality.
Define
\[
 P(u,t):=g(Ax)+\ip{A^*z}{u-x}+h(u)-\frac{t^q}{2}\norm z^2.
\]
Then $\Gcal_q(u,t)\geq P(u,t)$ for $u\in\Sset$ and $t\geq0$,
and $P(x,0)=G>0$. Since $y\in\partial f(Kx)$, we have
$Kx\in\partial f^*(y)$. Taking $\zeta=Kx$ in
\eqref{eq:denominator-support} gives
\[
 \nuf(u,v)\leq
 D+\ip{K^*y}{u-x}+\ip{K(u-x)}{v-y}=:B(u,v).
\]
Here $B(x,y)=D>0$ and $\nabla_v B(x,y)=0$.
On a sufficiently small ambient open neighborhood of $(x,y,0)$,
$P$ and $B$ are positive. At every point in this neighborhood
that belongs to the effective domain,
\[
 \Ecal_q^{(1)}(u,v,t)
 \geq\frac{P(u,t)}{\nuf(u,v)}+\lambda t^q
 \geq\frac{P(u,t)}{B(u,v)}+\lambda t^q=:\Phi(u,v,t).
\]
The function $\Phi$ is $C^1$ on this ambient neighborhood, including
negative $t$, since $q\geq2$ is an integer. Moreover,
\[
 \Phi(x,y,0)=r=\Ecal_q^{(1)}(x,y,0),\qquad
 \nabla\Phi(x,y,0)=
 \begin{pmatrix}
 \displaystyle\frac{A^*z+\nabla h(x)-rK^*y}{D}\\[1.5ex]
 0\\[1.5ex]0
 \end{pmatrix}.
\]
Outside the effective domain, including all points with $t<0$,
the potential is $+\infty$, so the lower-support inequality still
holds. Applying~\eqref{eq:support-remainder} and the normal-cone
inequality as in part (i) proves~\eqref{eq:E-zero-subgradient2}.
\end{proof}

The calmness assumption is unnecessary for the particular
Fr\'echet-subgradient inclusion in
Proposition~\ref{prop:E-subgradient2}, since the inclusion follows
from a direct smooth lower-support construction. In contrast,
the exact Fr\'echet-subdifferential identities in
\cite[Proposition~2.3]{LiShenZhangZhou2022} and
\cite[Lemma~2]{HTX} are established under calmness assumptions.

\medskip
\medskip
Define
\begin{eqnarray}\label{eq:lifted-states}
\tau_k:=\gamma_k^{1/q},\qquad
 W^{k+1}:=(x^{k+1},y^{k+1},\tau_k),\qquad
 Q_{k+1}:=\Ecal_q^{(1)}(W^{k+1}).
\end{eqnarray}
Theorem~\ref{PriTheo2R} ensures that these iterates $W^{k}$  belong to
$\Ccal_q^{(1)}$ for all sufficiently large $k$.

\begin{lemma}[Descent of the fixed potential]\label{lem:fixed-descent}
Suppose Assumption~\ref{ass:standing} holds. Let
$\{W^k\}_{k\geq1}$ be generated by Algorithm~\ref{alg:SFSPS}, and set
$
 \Omega_W:=\clust\{W^k\}_{k\geq1}.$
Then:
\begin{itemize}
\item[(i)] For all sufficiently large $k$,
\begin{eqnarray}\label{eq:Q-descent1}
 Q_{k+1}\leq Q_k-a\delta_ks_k^2,
 \qquad a:=\frac{\chi-1}{2\chi M}.
\end{eqnarray}
\item[(ii)]
$
 \lim_{k\to\infty}Q_k=\overline\theta,$
where $\overline\theta$ is defined in Theorem~\ref{PriTheo2R}.
\item[(iii)] For every
$(\overline x,\overline y,0)\in\Omega_W$,
$
 \Ecal_q^{(1)}(\overline x,\overline y,0)=\overline\theta.
$
\end{itemize}
\end{lemma}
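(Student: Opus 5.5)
The plan is to relate $Q_{k+1}$ to the quotient sequence $\theta_k$ and reuse the estimates of Theorem~\ref{PriTheo2R}. First we check that $W^{k+1}$ lies in $\Ccal_q^{(1)}$ for large $k$:
\begin{itemize}
\item $x^{k+1}\in\Sset$;
\item $y^{k+1}\in\partial f(Kx^k)$, so $f^*(y^{k+1})<\infty$ and $\norm{y^{k+1}}\leq B_y<R$ by Lemma~\ref{lem:denominator-bounds}(ii);
\item $\tau_k=\gamma_k^{1/q}\to0$, so $\tau_k\leq\overline\tau$ eventually;
\item $D_{k+1}=\nuf(x^{k+1},y^{k+1})\geq m>m/2$ by \eqref{K1}.
\end{itemize}
Hence, for large $k$,
\[
Q_{k+1}=\frac{g_{\gamma_k}(Ax^{k+1})+h(x^{k+1})}{D_{k+1}}+\lambda\gamma_k=\frac{\Psi_{k+1}(x^{k+1})}{D_{k+1}}+\lambda\gamma_k .
\]

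\emph{Step (i).} Start from \eqref{eq:basic-residual-descent}, which reads $\Psi_{k+1}(x^{k+1})\leq\theta_kD_{k+1}-\frac{\chi-1}{2\chi}\delta_ks_k^2+\frac{\ell^2}{2}\Delta\gamma_k$. Divide by $D_{k+1}\in[m,M]$ to get
\[
\frac{\Psi_{k+1}(x^{k+1})}{D_{k+1}}\leq\theta_k-a\delta_ks_k^2+\lambda_0(\gamma_{k-1}-\gamma_k).
\]
Next, $\Psi_k(x^k)>0$ and $0<D_k\leq f(Kx^k)$ (Fenchel inequality), so $\theta_k=\Psi_k(x^k)/f(Kx^k)\leq\Psi_k(x^k)/D_k=Q_k-\lambda\gamma_{k-1}$. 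Substituting and adding $\lambda\gamma_k$ gives
\[
Q_{k+1}\leq Q_k-a\delta_ks_k^2-(\lambda-\lambda_0)(\gamma_{k-1}-\gamma_k)\leq Q_k-a\delta_ks_k^2,
\]
because $\lambda\geq\lambda_0$ and $\gamma_k$ is nonincreasing. The only delicate points are the sign conditions needed when dividing, namely $\Psi_k(x^k)\geq\alpha/2>0$ for large $k$ and $D_k>0$. Both are already available from the proof of Theorem~\ref{PriTheo2R}.

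\emph{Step (ii).} Write
\[
Q_k-\theta_k=\Psi_k(x^k)\,\frac{f(Kx^k)-D_k}{D_k\,f(Kx^k)}+\lambda\gamma_{k-1}.
\]
The factor $\Psi_k(x^k)$ is bounded, since $g_{\gamma_{k-1}}\leq g$ is bounded on the compact set $A(\Sset)$ by Lemma~\ref{lem:uniform-moreau}(iv) and $h$ is bounded on $\Sset$. The denominators are at least $m^2$. By \eqref{add} and $s_k\to0$ from Theorem~\ref{PriTheo2R}(viii), $|f(Kx^k)-D_k|\leq2\sigma_KB_ys_{k-1}\to0$. Since $\gamma_{k-1}\to0$, we get $Q_k-\theta_k\to0$, and Theorem~\ref{PriTheo2R}(iv) then gives $Q_k\to\overline\theta$.

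\emph{Step (iii).} Every cluster point of $W^k$ has third component $0$, because $\tau_k\to0$. Let $W^{k_j}\to(\overline x,\overline y,0)$. We then argue as follows.
\begin{itemize}
\item Since $s_k\to0$, we also have $x^{k_j-1}\to\overline x$.
\item Because $y^{k_j}\in\partial f(Kx^{k_j-1})$ and $\partial f$ has closed graph with $Kx^{k_j-1}\to K\overline x\in\operatorname{int}\dom f$, we get $\overline y\in\partial f(K\overline x)$. Hence $\nuf(\overline x,\overline y)=f(K\overline x)$, and $(\overline x,\overline y,0)\in\Ccal_q^{(1)}$.
\item The numerator $\Gcal_q(x^{k_j},\tau_{k_j-1})$ converges to $\Gcal_q(\overline x,0)$ by continuity of $\Gcal_q$ on $\Sset\times[0,\overline\tau]$.
\item The denominators $D_{k_j}$ converge to $f(K\overline x)$, using \eqref{add} together with continuity of $f\circ K$ on $\Sset$.
\end{itemize}
Therefore
\[
Q_{k_j}\to\frac{g(A\overline x)+h(\overline x)}{f(K\overline x)}=\Ecal_q^{(1)}(\overline x,\overline y,0).
\]
By part (ii) this limit equals $\overline\theta$. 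This is consistent with Theorem~\ref{PriTheo2R}(vi), since $\overline x$ is a cluster point of $\{x^k\}$.

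The main obstacle is the comparison $\theta_k\leq\Psi_k(x^k)/D_k$ in Step (i). It is exactly what lets the fixed potential, evaluated with the Fenchel denominator rather than $f(Kx^k)$, absorb the quasi-descent \eqref{eq:theta-quasidescent}. The choice $\lambda\geq\lambda_0$ is what cancels the smoothing residual.
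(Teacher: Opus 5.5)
Your proposal is correct and follows essentially the same route as the paper. For (i) you divide \eqref{eq:basic-residual-descent} by $D_{k+1}$, use $\theta_k\le\Psi_k(x^k)/D_k$ from the Fenchel inequality, and absorb the smoothing residual via $\lambda\ge\lambda_0$; for (ii) you compare $Q_k$ with $\theta_k$ through \eqref{add}; and for (iii) you use graph closedness of $\partial f$. The only differences are cosmetic: you cite Theorem~\ref{PriTheo2R}(viii) for $s_k\to0$ instead of re-telescoping, and in (iii) you pass to the limit in $Q_{k_j}$ and invoke (ii) rather than passing to the limit in $\theta_{k_j}$.
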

\begin{proof}
For all sufficiently large $k$,
\[
 m\leq D_k\leq f(Kx^k)\leq M,
 \qquad \Psi_k(x^k)>0.
\]

(i) Set $r_{k+1}:=\Psi_{k+1}(x^{k+1})/D_{k+1}$.
Dividing~\eqref{eq:basic-residual-descent} by $D_{k+1}$ gives
\[
 r_{k+1}\leq\theta_k-a\delta_ks_k^2
                 +\lambda_0\Delta\gamma_k.
\]
Since $D_k\leq f(Kx^k)$ and $\Psi_k(x^k)>0$,
$\theta_k\leq r_k$. Consequently,
\[
 Q_{k+1}
 \leq Q_k-a\delta_ks_k^2
       -(\lambda-\lambda_0)\Delta\gamma_k
 \leq Q_k-a\delta_ks_k^2,
\]
which proves~\eqref{eq:Q-descent1}.

(ii) Since $D_k\leq M$, $\Gcal_q\geq\alpha/2$, and
$\lambda\geq0$,
$
 Q_k\geq\frac{\alpha}{2M}$
for all sufficiently large $k$. Telescoping
\eqref{eq:Q-descent1} therefore yields
$\sum_k\delta_ks_k^2<\infty$. Because
$\inf_k\delta_k>0$, we have $s_k\to0$.

By~\eqref{eq:Dhat} and the $\sigma_KB_y$-Lipschitz continuity of
$f\circ K$ on $\Sset$,
\[
 |D_k-f(Kx^k)|
 \leq2\sigma_KB_y s_{k-1}\longrightarrow0.
\]
Moreover, Lemma~\ref{lem:uniform-moreau}(iii)--(iv) and compactness
give a constant $B_\Psi>0$ such that
$|\Psi_k(x)|\leq B_\Psi$ for every $k\geq1$ and $x\in\Sset$.
Hence
\[
 |r_k-\theta_k|
 \leq
 \frac{2B_\Psi\sigma_KB_y}{m^2}s_{k-1}
 \longrightarrow0.
\]
Therefore
$Q_k=r_k+\lambda\gamma_{k-1}\to\overline\theta$.

(iii) Let $W^{k_j}\to(\overline x,\overline y,0)$. Since
$s_{k_j-1}\to0$, we also have $x^{k_j-1}\to\overline x$.
The relations
$y^{k_j}\in\partial f(Kx^{k_j-1})$ and the graph closedness of
$\partial f$ imply
\[
 \overline y\in\partial f(K\overline x),
 \qquad
 \nuf(\overline x,\overline y)=f(K\overline x).
\]
Furthermore, Lemma~\ref{lem:uniform-moreau}(iii)--(iv) gives
\[
 \left|\Psi_{k_j}(x^{k_j})
 -\bigl(g(A\overline x)+h(\overline x)\bigr)\right|
\leq\frac{\ell^2}{2}\gamma_{k_j-1}
 +|g(Ax^{k_j})-g(A\overline x)|
 +|h(x^{k_j})-h(\overline x)|
 \rightarrow0.
\]
Since $f\circ K$ is continuous and positive on $\Sset$,
$
 \theta_{k_j}
 =\frac{\Psi_{k_j}(x^{k_j})}{f(Kx^{k_j})}
 \rightarrow F(\overline x).
$
On the other hand, $\theta_k\to\overline\theta$, so
$F(\overline x)=\overline\theta$. Consequently,
$
 \Ecal_q^{(1)}(\overline x,\overline y,0)
 =F(\overline x)=\overline\theta.
$
\end{proof}

\begin{theorem}[Relative-error estimate]\label{lem:relative-error}
Suppose Assumption~\ref{ass:standing} holds. Let
$\{W^k\}_{k\geq1}$ be generated by Algorithm~\ref{alg:SFSPS}.
Then there exist $b>0$ and an index $N_3$ such that
\begin{eqnarray}\label{eq:corrected-relative-error}
 \dist\!\left(0,\partial\Ecal_q^{(1)}(W^{k+1})\right)
 \leq b\bigl(\delta_ks_k+\tau_k^{q-1}+\rho_k+\Delta\gamma_k\bigr)
 \qquad(k\geq N_3).
\end{eqnarray}
\end{theorem}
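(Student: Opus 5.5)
The plan is to apply Proposition~\ref{prop:E-subgradient2}(i) at $W^{k+1}=(x^{k+1},y^{k+1},\tau_k)$ and choose $\zeta$ and $n$ so that the resulting subgradient can be bounded using the optimality condition of the $x$-update~\eqref{eq:algorithm-x}. For $k$ large we have $0<\tau_k<\overline\tau$, and by~\eqref{K1} $D:=D_{k+1}\geq m>m/2$. We also have $\norm{y^{k+1}}\leq B_y<R$, so $W^{k+1}\in\Ccal_q^{(1)}$ and the proposition applies, with $z_{\tau_k}(x^{k+1})=\nabla g_{\gamma_k}(Ax^{k+1})$, $G=\Psi_{k+1}(x^{k+1})$ and $r=r_{k+1}$.

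For the $y$-block, the natural choice is $\zeta:=Kx^k$. This is admissible because $y^{k+1}\in\partial f(Kx^k)$ gives $Kx^k\in\partial f^*(y^{k+1})$. The second component is then $\frac{G}{D^2}K(x^k-x^{k+1})$, whose norm is at most $\frac{B_\Psi\sigma_K}{m^2}s_k$. Since $\inf_k\delta_k>0$, this is $O(\delta_ks_k)$. The $\tau$-block has norm at most $q\tau_k^{q-1}(\lambda+\ell^2/(2m))$ by Lemma~\ref{lem:uniform-moreau}(i), which accounts for the $\tau_k^{q-1}$ term.

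For the $x$-block, the projection step gives
\[
n:=-\delta_kd_k-\bigl(A^*z^k+\nabla h(x^k)-\theta_kK^*y^{k+1}\bigr)\in N_{\Sset}(x^{k+1}).
\]
Use $D^{-1}n$ (still a normal vector, since $D>0$) in~\eqref{eq:E-subgradient}. The first component then becomes $D^{-1}$ times
\[
-\delta_kd_k+A^*\bigl(\nabla g_{\gamma_k}(Ax^{k+1})-\nabla g_{\gamma_{k-1}}(Ax^k)\bigr)+\bigl(\nabla h(x^{k+1})-\nabla h(x^k)\bigr)+(\theta_k-r_{k+1})K^*y^{k+1}.
\]
The $\nabla h$ term is bounded by $L_{\nabla h}s_k$.

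The main obstacle is the Moreau-gradient difference, which involves both a change of argument and a change of parameter. Split it as
\[
\bigl[\nabla g_{\gamma_k}(Ax^{k+1})-\nabla g_{\gamma_k}(Ax^k)\bigr]+\bigl[\nabla g_{\gamma_k}(Ax^k)-\nabla g_{\gamma_{k-1}}(Ax^k)\bigr].
\]
By Lemma~\ref{lem:moreau-identities}(iv), the first bracket is at most $\sigma_As_k/\gamma_k\leq\delta_ks_k/\chi$. This is exactly where the growing stepsize is absorbed. By Lemma~\ref{lem:moreau-parameter-gradient} with $\gamma=\gamma_k\leq\widehat\gamma=\gamma_{k-1}$, the second bracket is at most $\ell\rho_k$.

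It remains to bound $|\theta_k-r_{k+1}|$. Write
\[
\theta_k-r_{k+1}=\frac{\Psi_k(x^k)}{f(Kx^k)}-\frac{\Psi_{k+1}(x^{k+1})}{D_{k+1}}.
\]
By~\eqref{eq:Dhat}, $|D_{k+1}-f(Kx^k)|\leq\sigma_KB_ys_k$. For the numerators, insert $\Psi_{k+1}(x^k)=g_{\gamma_k}(Ax^k)+h(x^k)$. Lemma~\ref{lem:uniform-moreau}(v) gives $|\Psi_{k+1}(x^k)-\Psi_k(x^k)|\leq\frac{\ell^2}{2}\Delta\gamma_k$. Since $g_{\gamma_k}\circ A$ is $\sigma_A\ell$-Lipschitz on $\Sset$ by Lemma~\ref{lem:uniform-moreau}(i), and $h$ is Lipschitz on the compact set $\Sset$, we get $|\Psi_{k+1}(x^{k+1})-\Psi_{k+1}(x^k)|=O(s_k)$. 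Together with $m\leq D_{k+1}$, $f(Kx^k)\leq M$ and $|\Psi|\leq B_\Psi$, this yields $|\theta_k-r_{k+1}|=O(s_k+\Delta\gamma_k)$, which is the source of the $\Delta\gamma_k$ term.

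Multiplying by $\sigma_KB_y/m$ and collecting all terms, using $s_k\leq\delta_ks_k/\inf_j\delta_j$, gives~\eqref{eq:corrected-relative-error} with a suitable $b$ and $N_3\geq N_2$ large enough that $\tau_k<\overline\tau$.
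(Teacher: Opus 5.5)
Your proposal is correct and follows the paper's proof essentially step for step. You use the same choices $\zeta=Kx^k$ and $n=\xi^{k+1}/D_{k+1}$ in Proposition~\ref{prop:E-subgradient2}(i), the same decomposition of $\theta_k-r_{k+1}$, and the same component bounds. The only difference is that you split the Moreau-gradient difference through $\nabla g_{\gamma_k}(Ax^k)$, whereas the paper passes through $\nabla g_{\gamma_{k-1}}(Ax^{k+1})$; both yield $O(\delta_k s_k+\rho_k)$. One small correction: your bound $\sigma_A s_k/\gamma_k\leq\delta_k s_k/\chi$ holds only after the factor $A^*$ supplies the second $\sigma_A$, which is harmless since constants are absorbed into $b$.
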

\begin{proof}
The projection step~\eqref{eq:algorithm-x} gives
\begin{eqnarray}\label{eq:projection-optimality}
 \xi^{k+1}:=-\nabla\Psi_k(x^k)+\theta_kK^*y^{k+1}-\delta_kd_k
 \in N_{\Sset}(x^{k+1}).
\end{eqnarray}
For all sufficiently large $k$, apply
Proposition~\ref{prop:E-subgradient2}(i) at $W^{k+1}$ with
$\zeta=Kx^k\in\partial f^*(y^{k+1})$ and
$n=\xi^{k+1}/D_{k+1}\in N_{\Sset}(x^{k+1})$.
 With
$r_{k+1}:=\Psi_{k+1}(x^{k+1})/D_{k+1}$, this gives
$w^{k+1}\in\whpartial\Ecal_q^{(1)}(W^{k+1})$ whose components are
\begin{align}
 w_x^{k+1}
 &=\frac{-\delta_kd_k+\nabla\Psi_{k+1}(x^{k+1})-\nabla\Psi_k(x^k)
              +(\theta_k-r_{k+1})K^*y^{k+1}}{D_{k+1}},
 \label{eq:x-component}\\
 w_y^{k+1}
 &=\frac{\Psi_{k+1}(x^{k+1})}{D_{k+1}^2}K(x^k-x^{k+1}),
 \label{eq:y-component}\\
 w_\tau^{k+1}
 &=q\tau_k^{q-1}\left(\lambda-
       \frac{\norm{\nabla g_{\gamma_k}(Ax^{k+1})}^2}{2D_{k+1}}\right).
 \label{eq:tau-component}
\end{align}
The gradients in~\eqref{eq:x-component} have different smoothing
parameters. Lemma~\ref{lem:moreau-parameter-gradient} and the
$1/\gamma_{k-1}$-Lipschitz continuity of $\nabla g_{\gamma_{k-1}}$ give
\begin{align}
\norm{\nabla(g_{\gamma_k}\circ A)(x^{k+1})
              -\nabla(g_{\gamma_{k-1}}\circ A)(x^k)}\leq\sigma_A\ell\rho_k
                 +\frac{\sigma_A^2}{\gamma_{k-1}}s_k.
 \label{eq:cross-parameter-gradient}
\end{align}
Adding the $h$-gradient difference and using
$\gamma_k\leq\gamma_{k-1}$ yields
\begin{eqnarray}\label{eq:cross-psi-gradient}
 \norm{\nabla\Psi_{k+1}(x^{k+1})-\nabla\Psi_k(x^k)}
 \leq\frac{\delta_k}{\chi}s_k+\sigma_A\ell\rho_k.
\end{eqnarray}

Set $L_G:=\sigma_A\ell+\max_{x\in\Sset}\norm{\nabla h(x)}$.
Lemma~\ref{lem:uniform-moreau}(i) and integration along segments
in the convex set $\Sset$ show that every $\Psi_j$ is
$L_G$-Lipschitz on $\Sset$. Moreover, there exists $B_G<\infty$ with
$\abs{\Psi_j(x)}\leq B_G$ for every $j\geq1$ and $x\in\Sset$,
by Lemma~\ref{lem:uniform-moreau}(iii)--(iv) and $\gamma_{j-1}\leq\gamma_0$.
The spatial Lipschitz bound and Lemma~\ref{lem:uniform-moreau}(v) imply
\begin{eqnarray}\label{eq:numerator-change}
 \abs{\Psi_{k+1}(x^{k+1})-\Psi_k(x^k)}
 \leq L_Gs_k+\frac{\ell^2}{2}\Delta\gamma_k.
\end{eqnarray}
Writing the ratio difference as
\[
 \theta_k-r_{k+1}
 =\frac{\Psi_k(x^k)-\Psi_{k+1}(x^{k+1})}{D_{k+1}}
 +\Psi_k(x^k)\frac{D_{k+1}-f(Kx^k)}{D_{k+1}f(Kx^k)}
\]
and using~\eqref{eq:numerator-change}, we obtain
\begin{eqnarray}\label{eq:theta-rhat}
 \abs{\theta_k-r_{k+1}}
 \leq\left(\frac{L_G}{m}+\frac{B_G\sigma_K B_y}{m^2}\right)s_k
                 +\frac{\ell^2}{2m}\Delta\gamma_k.
\end{eqnarray}
Since $\inf_k\delta_k>0$, combining~\eqref{eq:x-component},
\eqref{eq:cross-psi-gradient}, and~\eqref{eq:theta-rhat} yields
\[
 \norm{w_x^{k+1}}
 \leq C_x\bigl(\delta_ks_k+\rho_k+\Delta\gamma_k\bigr)
\]
for some constant $C_x>0$ independent of $k$.
 The remaining components satisfy
\[
 \norm{w_y^{k+1}}\leq\frac{B_G \sigma_K}{m^2}s_k,
 \qquad 0\leq w_\tau^{k+1}\leq q\lambda\tau_k^{q-1},
\]
where the second bound follows from
$\lambda\geq\lambda_0=\ell^2/(2m)$, $D_{k+1}\geq m$, and
$\|\nabla g_{\gamma_k}(Ax^{k+1})\|\leq\ell$.

Summing these bounds and using
$\whpartial\Ecal_q^{(1)}\subseteq\partial\Ecal_q^{(1)}$
proves~\eqref{eq:corrected-relative-error}.
\end{proof}

\subsection{The smooth-denominator potential}
Suppose that $f$ is differentiable on an open set containing
$K(\Sset)$ and has a gradient that is
$L_{\nabla f}$-Lipschitz continuous there.
Define
\begin{eqnarray}\label{eq:C2-definition}
\Ccal_q^{(2)}:=\Sset\times[0,\overline\tau],
\end{eqnarray}
and
\begin{eqnarray}\label{eq:E2-definition}
\Ecal_q^{(2)}(x,\tau):=
 \begin{cases}
 \displaystyle\frac{\Gcal_q(x,\tau)}{f(Kx)}+\lambda\tau^q,
           &(x,\tau)\in\Ccal_q^{(2)},\\[1ex]
 +\infty,&\text{otherwise}.
 \end{cases}
\end{eqnarray}
This potential omits the denominator dual variable.

\begin{proposition}[Estimates for the smooth-denominator potential]
\label{prop:smooth-potential}
Suppose Assumption~\ref{ass:standing} holds and $f$ has a Lipschitz
continuous gradient on an open set containing $K(\Sset)$.
\begin{itemize}
\item[(i)] The function $\Ecal_q^{(2)}$ is proper and lower semicontinuous.
\item[(ii)] For $x\in\Sset$, $0<\tau<\overline\tau$,
$D:=f(Kx)$, $r:=\Gcal_q(x,\tau)/D$,
$z_\tau:=\nabla g_{\tau^q}(Ax)$, and $n\in N_{\Sset}(x)$,
\begin{eqnarray}\label{eq:E2-positive-subgradient}
 \begin{pmatrix}
 \displaystyle\frac{A^*z_\tau+\nabla h(x)-rK^*\nabla f(Kx)}{D}+n\\[1.5ex]
 \displaystyle q\tau^{q-1}
           \left(\lambda-\frac{\norm{z_\tau}^2}{2D}\right)
 \end{pmatrix}
 \in\whpartial\Ecal_q^{(2)}(x,\tau).
\end{eqnarray}
\item[(iii)] At $\tau=0$, for $x\in\Sset$,
$z\in\partial g(Ax)$, and $n\in N_{\Sset}(x)$,
\begin{eqnarray}\label{eq:E2-zero-subgradient}
 \begin{pmatrix}
 \displaystyle\frac{A^*z+\nabla h(x)-F(x)K^*\nabla f(Kx)}{f(Kx)}+n\\[1.5ex]
 0
 \end{pmatrix}
 \in\whpartial\Ecal_q^{(2)}(x,0).
\end{eqnarray}
\end{itemize}
\end{proposition}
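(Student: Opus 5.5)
The proof follows Propositions~\ref{prop:E-subgradient} and~\ref{prop:E-subgradient2}, with one simplification: the denominator is now a $C^1$ function of $x$ alone. We therefore need no Fenchel lower support $B_\zeta$. A first-order expansion of $f\circ K$ suffices.

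For (i), $\Ccal_q^{(2)}=\Sset\times[0,\overline\tau]$ is nonempty and compact. On it, $\Gcal_q$ is continuous by the discussion after its definition. The map $x\mapsto f(Kx)$ is continuous, and by Lemma~\ref{lem:denominator-bounds}(i) it is bounded below by $m>0$ on $\Sset$. Hence $\Ecal_q^{(2)}$ is finite and continuous on a closed set and equals $+\infty$ off that set. Such a function is lower semicontinuous. Properness is immediate since the domain is nonempty.

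For (ii), fix $U_0=(x,\tau)$ with $0<\tau<\overline\tau$. On a small ambient open neighborhood of $U_0$ we have $t>0$. On that neighborhood, $\Gcal_q(u,t)=g_{t^q}(Au)+h(u)$ extends to a $C^1$ function with derivatives~\eqref{eq:G-derivatives}. Since $f$ is $C^1$ on an open set containing $K(\Sset)$, the map $u\mapsto f(Ku)$ is $C^1$ and positive near $x$. Thus $\Phi(u,t):=\Gcal_q(u,t)/f(Ku)+\lambda t^q$ is $C^1$ near $U_0$. It coincides with $\Ecal_q^{(2)}$ on the effective domain, and $\Ecal_q^{(2)}=+\infty\geq\Phi$ elsewhere. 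The quotient rule gives
\[
\nabla_u\Phi(U_0)=\frac{A^*z_\tau+\nabla h(x)-rK^*\nabla f(Kx)}{D},
\qquad
\partial_t\Phi(U_0)=q\tau^{q-1}\Bigl(\lambda-\frac{\norm{z_\tau}^2}{2D}\Bigr).
\]
Adding $(n,0)$ with $n\in N_\Sset(x)$ is handled by the same remainder argument as in~\eqref{eq:support-remainder}. On the effective domain $u\in\Sset$, so $\ip n{u-x}\le 0$. This proves~\eqref{eq:E2-positive-subgradient}.

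For (iii), we reuse~\eqref{eq:zero-envelope-support} with $z\in\partial g(Ax)$. It gives $\Gcal_q(u,t)\ge P(u,t):=g(Ax)+\ip{A^*z}{u-x}+h(u)-\tfrac{t^q}{2}\norm z^2$ for $u\in\Sset$ and $t\ge0$, with $P(x,0)=g(Ax)+h(x)>0$. Shrink the ambient neighborhood so that $P>0$ and $f(Ku)>0$ on it. Then on the effective domain
\[
\Ecal_q^{(2)}(u,t)\ge \Phi(u,t):=\frac{P(u,t)}{f(Ku)}+\lambda t^q.
\]
Here $\Phi$ is $C^1$ for all real $t$ because $q\ge2$ is an integer. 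Moreover $\Phi(x,0)=F(x)=\Ecal_q^{(2)}(x,0)$ and $\partial_t\Phi(x,0)=0$. The quotient rule gives $\nabla_u\Phi(x,0)=\bigl(A^*z+\nabla h(x)-F(x)K^*\nabla f(Kx)\bigr)/f(Kx)$. Points with $t<0$ lie outside the domain, where the support inequality holds trivially. The normal-cone step is then repeated verbatim.

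The only delicate step is (iii). There, positivity of both $P$ and $f(K\cdot)$ must hold on an open neighborhood, so that replacing $\Gcal_q$ by the smaller $P$ in the numerator preserves the inequality. This is exactly where $G>0$ (Assumption~\ref{ass:standing}(e)) and $f(Kx)>m$ are used.
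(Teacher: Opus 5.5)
Your proof is correct and follows the paper's argument step by step: continuity on the compact set $\Ccal_q^{(2)}$ for (i), a $C^1$ quotient that agrees with $\Ecal_q^{(2)}$ on its effective domain for (ii), and, for (iii), the numerator lower support $P$ divided by $f(K\cdot)$ followed by the normal-cone remainder argument. One small correction to your closing remark: the denominator $f(Ku)$ is kept here rather than replaced by a larger function (as $B$ replaced $\nuf$ in the Fenchel case), so $\Gcal_q(u,t)/f(Ku)\geq P(u,t)/f(Ku)$ needs only $f(Ku)>0$, not positivity of $P$, and Assumption~\ref{ass:standing}(e) is not what makes this step work.
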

\begin{proof}
(i) Continuity of $\Gcal_q$ and positivity of $f\circ K$ on the compact
domain prove (i).\\
(ii) For $\tau>0$, differentiating the quotient
on an open neighborhood and adding the normal-cone term proves (ii).\\
(iii) At $\tau=0$, use the numerator lower support $P$ from the proof
of Proposition~\ref{prop:E-subgradient2}(ii). The function
\[
\widetilde\Phi:(u,t)\longmapsto\frac{P(u,t)}{f(Ku)}+\lambda t^q
\]
is $C^1$ on an open neighborhood of $(x,0)$ and is a smooth
lower support of $\Ecal_q^{(2)}$ at $V_0:=(x,0)$.
For $V=(u,t)$, since $\widetilde\Phi$ is $C^1$, its first-order remainder satisfies
\begin{eqnarray}\label{eq:Psi-remainder}
 R_{\widetilde\Phi}(V):=\widetilde\Phi(V)-\widetilde\Phi(V_0)
                 -\ip{\nabla\widetilde\Phi(V_0)}{V-V_0},
 \qquad
 \frac{R_{\widetilde\Phi}(V)}{\norm{V-V_0}}\longrightarrow0.
\end{eqnarray}
The gradient of $\widetilde\Phi$ yields~\eqref{eq:E2-zero-subgradient} with $n=0$.
Combining the signed-remainder argument~\eqref{eq:support-remainder}
with $\ip n{u-x}\leq0$ for $u\in\Sset$ proves (iii), including the
ambient perturbations with $t<0$.
\end{proof}

\medskip
\medskip

Define $\widetilde W^{k+1}:=(x^{k+1},\tau_k)$ and let
$
 P_{k+1}:=\Ecal_q^{(2)}(\widetilde W^{k+1})
         =\theta_{k+1}+\lambda\gamma_k.
$

\begin{lemma}[Descent of the fixed potential]\label{lem:smooth-fixed-descent}
Suppose that the assumptions of
Proposition~\ref{prop:smooth-potential} hold. Let
$\{\widetilde W^k\}_{k\geq1}$ be generated by
Algorithm~\ref{alg:SFSPS}.
For all sufficiently large $k$,
\begin{eqnarray}\label{eq:P-descent}
 P_{k+1}\leq P_k-a\delta_ks_k^2,\qquad a=\frac{\chi-1}{2\chi M}.
\end{eqnarray}
Consequently, $\lim\limits_{k \to +\infty} P_k=\overline \theta$, where $\overline\theta$ is defined in Theorem \ref{PriTheo2R}.
\end{lemma}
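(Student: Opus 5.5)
The plan is to obtain \eqref{eq:P-descent} directly from the quotient quasi-descent \eqref{eq:theta-quasidescent} of Theorem~\ref{PriTheo2R}(iv). We first identify $P_k$. For $k\geq1$ we have $x^k\in\Sset$, and $\tau_{k-1}=\gamma_{k-1}^{1/q}\leq\overline\tau$ for all sufficiently large $k$ because $\gamma_k\downarrow0$. Hence $\widetilde W^k\in\Ccal_q^{(2)}$ and
\[
 P_k=\frac{g_{\gamma_{k-1}}(Ax^k)+h(x^k)}{f(Kx^k)}+\lambda\gamma_{k-1}
 =\theta_k+\lambda\gamma_{k-1},
\]
by the Moreau recursion $\theta_k=\Psi_k(x^k)/f(Kx^k)$. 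This identity is consistent with the definition $P_{k+1}=\theta_{k+1}+\lambda\gamma_k$ stated before the lemma.

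For $k\geq N_1$, we add $\lambda\gamma_k$ to both sides of \eqref{eq:theta-quasidescent} and write $\gamma_k=\gamma_{k-1}-\Delta\gamma_k$. This gives
\[
 P_{k+1}\leq \theta_k+\lambda\gamma_{k-1}-a\delta_ks_k^2-(\lambda-\lambda_0)\Delta\gamma_k
 = P_k-a\delta_ks_k^2-(\lambda-\lambda_0)\Delta\gamma_k,
\]
with $a=(\chi-1)/(2\chi M)$ and $\lambda_0=\ell^2/(2m)$. Since $\lambda\geq\lambda_0$ and $\Delta\gamma_k\geq0$ because $\{\gamma_k\}$ is nonincreasing, the last term can be dropped. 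This yields \eqref{eq:P-descent} for all sufficiently large $k$.

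The only subtle point is that \eqref{eq:theta-quasidescent} is stated for general (possibly nonsmooth) $f$. It applies here without modification because it was derived under Assumption~\ref{ass:standing} alone, and the smoothness of $f$ only makes $y^{k+1}=\nabla f(Kx^k)$ single-valued. For the limit statement, Theorem~\ref{PriTheo2R}(iv) gives $\theta_k\to\overline\theta$, and $\gamma_{k-1}\to0$ by \eqref{eq:gamma-basic}. Therefore $P_k=\theta_k+\lambda\gamma_{k-1}\to\overline\theta$. I do not expect a genuine obstacle; the step needing care is checking that the index threshold ensures both $\theta_k>0$ (through $N_1$) and $\tau_{k-1}\leq\overline\tau$, so that $P_k$ equals the finite formula above.
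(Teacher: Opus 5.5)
Your proposal is correct and follows essentially the same route as the paper: add $\lambda\gamma_k$ to \eqref{eq:theta-quasidescent}, rewrite the right-hand side as $P_k-a\delta_ks_k^2-(\lambda-\lambda_0)\Delta\gamma_k$, drop the nonpositive last term using $\lambda\geq\lambda_0$, and obtain the limit from $\theta_k\to\overline\theta$ and $\gamma_k\to0$. You also check that eventually $\tau_{k-1}\leq\overline\tau$, so that $P_k$ equals the finite expression $\theta_k+\lambda\gamma_{k-1}$; the paper leaves this point implicit.
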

\begin{proof}
For all sufficiently large $k$,~\eqref{eq:theta-quasidescent} gives
\[
 \begin{aligned}
 P_{k+1}
 &\leq\theta_k-a\delta_ks_k^2
       +\lambda_0\Delta\gamma_k+\lambda\gamma_k\\
 &=P_k-a\delta_ks_k^2-(\lambda-\lambda_0)\Delta\gamma_k\\
 &\leq P_k-a\delta_ks_k^2,
 \end{aligned}
\]
because $\lambda\geq\lambda_0$ and $\Delta\gamma_k\geq0$.
Moreover, $P_k=\theta_k+\lambda\gamma_{k-1}\to\overline\theta$
by Theorem~\ref{PriTheo2R}(iv) and $\gamma_k\to0$.
\end{proof}
\begin{theorem}[Relative-error estimate for the smooth denominator]\label{residualDiffF}
Suppose that the assumptions of Proposition~\ref{prop:smooth-potential} hold.
Let $\{\widetilde W^k\}_{k\geq1}$ be generated by Algorithm \ref{alg:SFSPS}.
There exist $\widehat b>0$ and an index $\widehat N_3$ such that
\begin{eqnarray}\label{eq:E2-relative-error}
 \dist\!\left(0,\partial\Ecal_q^{(2)}(\widetilde W^{k+1})\right)
 \leq\widehat b\bigl(\delta_ks_k+\tau_k^{q-1}+\rho_k+\Delta\gamma_k\bigr)
 \qquad(k\geq \widehat N_3).
\end{eqnarray}
\end{theorem}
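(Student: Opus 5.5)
The plan is to mirror the proof of Theorem~\ref{lem:relative-error}, replacing the Fenchel denominator $D_{k+1}=\nuf(x^{k+1},y^{k+1})$ by $f(Kx^{k+1})$ and $y^{k+1}$ by $\nabla f(Kx^{k+1})$. Since $f$ is differentiable near $K(\Sset)$, \eqref{eq:algorithm-y} forces $y^{k+1}=\nabla f(Kx^k)$. The projection step gives the same normal vector $\xi^{k+1}$ as in \eqref{eq:projection-optimality}, namely
\[
 \xi^{k+1}=-\nabla\Psi_k(x^k)+\theta_kK^*\nabla f(Kx^k)-\delta_kd_k\in N_{\Sset}(x^{k+1}).
\]
For $k$ large we have $0<\tau_k<\overline\tau$, so Proposition~\ref{prop:smooth-potential}(ii) applies at $\widetilde W^{k+1}=(x^{k+1},\tau_k)$. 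We take $D=f(Kx^{k+1})\geq m$, $r=\theta_{k+1}$, $z_\tau=\nabla g_{\gamma_k}(Ax^{k+1})$, and $n=\xi^{k+1}/f(Kx^{k+1})$. This yields $\widetilde w^{k+1}\in\whpartial\Ecal_q^{(2)}(\widetilde W^{k+1})\subseteq\partial\Ecal_q^{(2)}(\widetilde W^{k+1})$ with $x$-component
\[
 \widetilde w_x^{k+1}=\frac{-\delta_kd_k+\nabla\Psi_{k+1}(x^{k+1})-\nabla\Psi_k(x^k)+\theta_kK^*\nabla f(Kx^k)-\theta_{k+1}K^*\nabla f(Kx^{k+1})}{f(Kx^{k+1})}.
\]
The $\tau$-component is identical in form to \eqref{eq:tau-component} with $D_{k+1}$ replaced by $f(Kx^{k+1})$. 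Hence $0\leq\widetilde w_\tau^{k+1}\leq q\lambda\tau_k^{q-1}$, using $\lambda\geq\ell^2/(2m)$ and $\|\nabla g_{\gamma_k}\|\leq\ell$ from Lemma~\ref{lem:uniform-moreau}(i).

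Next we bound $\widetilde w_x^{k+1}$. The gradient difference is handled exactly by \eqref{eq:cross-psi-gradient}, giving the bound $(\delta_k/\chi)s_k+\sigma_A\ell\rho_k$. For the denominator terms we split
\[
 \theta_kK^*\nabla f(Kx^k)-\theta_{k+1}K^*\nabla f(Kx^{k+1})
 =(\theta_k-\theta_{k+1})K^*\nabla f(Kx^k)+\theta_{k+1}K^*\bigl(\nabla f(Kx^k)-\nabla f(Kx^{k+1})\bigr).
\]
The second piece is at most $(B_G/m)\sigma_K^2L_{\nabla f}s_k$, by Lipschitz continuity of $\nabla f$ and the bound $|\theta_{k+1}|\leq B_G/m$. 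For the first piece we write
\[
 \theta_k-\theta_{k+1}=\frac{\Psi_k(x^k)-\Psi_{k+1}(x^{k+1})}{f(Kx^{k+1})}+\Psi_k(x^k)\frac{f(Kx^{k+1})-f(Kx^k)}{f(Kx^k)f(Kx^{k+1})}.
\]
We then apply \eqref{eq:numerator-change} and the $\sigma_KB_y$-Lipschitz continuity of $f\circ K$ on $\Sset$. Together with $\|\nabla f(Kx^k)\|\leq B_y$ from Lemma~\ref{lem:denominator-bounds}(ii), this gives a bound $C(s_k+\Delta\gamma_k)$.

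Finally, since $\inf_k\delta_k>0$, every $s_k$ term is absorbed into $\delta_ks_k$. Summing the component bounds then gives \eqref{eq:E2-relative-error} with a constant $\widehat b$ independent of $k$.

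The only step needing care is the threshold $\widehat N_3$. It must guarantee $\tau_k<\overline\tau$, which holds since $\gamma_k\downarrow0$, and $\theta_k>0$, which holds from Theorem~\ref{PriTheo2R}(iv). Beyond that, the argument is simpler than for Theorem~\ref{lem:relative-error}, because no dual $y$-component has to be estimated.
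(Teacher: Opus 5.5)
Your proposal is correct and follows essentially the same route as the paper. It uses the same choice $n=\xi^{k+1}/f(Kx^{k+1})$ in Proposition~\ref{prop:smooth-potential}(ii) and the same identification $r=\theta_{k+1}$. It splits the denominator terms the same way, into $(\theta_k-\theta_{k+1})K^*\nabla f(Kx^k)$ plus a $\nabla f$-Lipschitz term, and it absorbs the $s_k$ terms using $\inf_k\delta_k>0$. You also spell out the quotient decomposition of $\theta_k-\theta_{k+1}$, which the paper only asserts; note that positivity of $\theta_k$ is not actually needed in your threshold, only $k\geq1$ and $\tau_k<\overline\tau$.
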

\begin{proof}
Use $n=\xi^{k+1}/f(Kx^{k+1})$ in
\eqref{eq:E2-positive-subgradient}, where $\xi^{k+1}$ is defined
in~\eqref{eq:projection-optimality}. Since
$y^{k+1}=\nabla f(Kx^k)$, the numerator of the resulting
$x$-component is
\[
\begin{aligned}
 &-\delta_kd_k+\nabla\Psi_{k+1}(x^{k+1})-\nabla\Psi_k(x^k)\\
 &\quad+(\theta_k-\theta_{k+1})K^*\nabla f(Kx^k)
 +\theta_{k+1}K^*
   \bigl(\nabla f(Kx^k)-\nabla f(Kx^{k+1})\bigr).
\end{aligned}
\]
The numerator estimate~\eqref{eq:numerator-change} and the
$\sigma_K B_y$-Lipschitz continuity of $f\circ K$ on $\Sset$ give
$\abs{\theta_{k+1}-\theta_k}\leq C(s_k+\Delta\gamma_k)$.
The final term is bounded in norm by
$
 \sup_j\abs{\theta_j}\,\sigma_K^2L_{\nabla f}s_k,
$
where $L_{\nabla f}$ is a Lipschitz constant of $\nabla f$ on
$K(\Sset)$. Using~\eqref{eq:cross-psi-gradient},
$f(Kx^{k+1})\geq m$, and $\inf_k\delta_k>0$, we obtain
\[
 \norm{w_x^{k+1}}
 \leq C'\bigl(\delta_ks_k+\rho_k+\Delta\gamma_k\bigr),
\]
where $C'>0$ is independent of $k$.
The $\tau$-component lies between zero and
$q\lambda\tau_k^{q-1}$, as in~\eqref{eq:tau-component}.
This proves~\eqref{eq:E2-relative-error}.
\end{proof}

\section{Whole-sequence convergence}\label{sec:main}
The following theorem gives a scaled KL finite-length principle for
variable-coefficient descent systems satisfying suitable descent and
relative-error estimates.

\begin{theorem}\label{lem:scaled-KL}
Let $H:\R^d\to(-\infty,+\infty]$ be a proper lower semicontinuous
function, let $\{U^k\}_{k\ge 1}\subset\R^d$ be bounded, and write
$H_k:=H(U^k)$. Suppose that $H_k\to H_\infty\in\R$ and that
$\Omega:=\clust\{U^k\}\subseteq\dom\partial H$ satisfies
$H\equiv H_\infty$ on $\Omega$. Assume that $H$ has the KL property
on $\Omega$. Let $s_k\geq0$, and let $\{\delta_k\}$ be a
nondecreasing positive scalar sequence. Suppose that, for all
sufficiently large $k$,
\begin{eqnarray*}
\begin{aligned}
\textnormal{(a)}\quad&H_k-H_{k+1}\geq a\delta_ks_k^2,\\
\textnormal{(b)}\quad&\dist(0,\partial H(U^{k+1}))
 \leq b(\delta_ks_k+\varepsilon_k),
\end{aligned}
\end{eqnarray*}
where $a,b>0$ and $\varepsilon_k\geq0$, with
\begin{eqnarray}\label{eq:abstract-error-sum}
\sum_{k=1}^{+\infty}\frac{\varepsilon_k}{\delta_{k+1}}<\infty.
\end{eqnarray}
Then $\sum_{k=1}^{+\infty} s_k<\infty$.
\end{theorem}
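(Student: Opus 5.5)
The plan is the standard Attouch--Bolte--Svaiter finite-length argument, with one change. The relative-error bound (b) carries the factor $\delta_k$, and the descent (a) carries $\delta_k$ at the previous index. We divide by $\delta_{k+1}$ and use monotonicity $\delta_k\leq\delta_{k+1}$, so the scaling disappears and only the weighted residual $\varepsilon_k/\delta_{k+1}$ survives. Assumption \eqref{eq:abstract-error-sum} makes exactly that residual summable.

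\emph{Step 1: preliminaries.} By (a), $\{H_k\}$ is eventually nonincreasing, and since $H_k\to H_\infty$ we get $H_k\geq H_\infty$ for all large $k$. If $H_{k_0}=H_\infty$ for some large $k_0$, then (a) forces $s_k=0$ for all $k\geq k_0$, and the conclusion is trivial. So assume $H_k>H_\infty$ for all large $k$. Because $\{U^k\}$ is bounded, $\Omega$ is nonempty and compact, and $\dist(U^k,\Omega)\to0$. By hypothesis $\Omega\subseteq\dom\partial H$, $H\equiv H_\infty$ on $\Omega$, and $H$ has the KL property there. Lemma~\ref{lem:uniform-KL} therefore provides $\varepsilon,a_0>0$ and a concave $\varphi$. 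For all $k\geq k_1$ we have $\dist(U^k,\Omega)<\varepsilon$ and $0<H_k-H_\infty<a_0$, hence
\[
\varphi'(H_{k}-H_\infty)\,\dist(0,\partial H(U^{k}))\geq1 .
\]
In particular, the right-hand side of (b) is strictly positive, so the division in Step 2 is legitimate.

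\emph{Step 2: the key one-step inequality.} Set $\Phi_k:=\varphi(H_k-H_\infty)$ and $\Delta_{k+1}:=\Phi_{k+1}-\Phi_{k+2}\geq0$. By concavity of $\varphi$, then (a) at index $k+1$, then the KL inequality combined with (b),
\[
\Delta_{k+1}\geq\varphi'(H_{k+1}-H_\infty)(H_{k+1}-H_{k+2})
\geq\frac{a\,\delta_{k+1}s_{k+1}^2}{b(\delta_ks_k+\varepsilon_k)} .
\]
Dividing by $\delta_{k+1}$ and using $\delta_k/\delta_{k+1}\leq1$ gives
\[
s_{k+1}^2\leq C\Delta_{k+1}\,(s_k+e_k),\qquad C:=b/a,\quad e_k:=\varepsilon_k/\delta_{k+1}.
\]
The inequality $2\sqrt{uv}\leq 2u+v/2$ then yields
\[
s_{k+1}\leq 2C\Delta_{k+1}+\tfrac14(s_k+e_k).
\]

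\emph{Step 3: summation.} Summing from $k_1$ to $N$ and telescoping $\sum\Delta_{k+1}\leq\Phi_{k_1+1}$ (here $\varphi\geq0$ is used), we obtain
\[
\sum_{k=k_1}^{N}s_{k+1}\leq 2C\,\Phi_{k_1+1}+\tfrac14\sum_{k=k_1}^{N}s_k+\tfrac14\sum_{k=k_1}^{N}e_k .
\]
The middle sum is at most $\frac14 s_{k_1}+\frac14\sum_{k=k_1}^{N}s_{k+1}$, so it can be absorbed into the left side. This gives $\frac34\sum_{k=k_1}^{N}s_{k+1}$ bounded by $2C\Phi_{k_1+1}+\frac14 s_{k_1}+\frac14\sum_k e_k$, which is finite by \eqref{eq:abstract-error-sum}. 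Letting $N\to\infty$ proves $\sum_k s_k<\infty$.

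The main point to get right is Step 2. The concavity inequality must use the KL inequality at $U^{k+1}$, with (b) written at index $k$, while the descent is applied at index $k+1$. This index alignment is precisely why the weight in \eqref{eq:abstract-error-sum} is $\delta_{k+1}$, and why monotonicity of $\delta_k$ is needed to replace $\delta_k/\delta_{k+1}$ by $1$. Everything else is routine: no step requires the iterates to remain in the KL neighborhood by an induction, since $\dist(U^k,\Omega)\to0$ follows directly from boundedness.
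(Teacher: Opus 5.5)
Your proposal is correct and follows the paper's proof essentially step for step. It uses the same case split on whether $H_j=H_\infty$, the same uniformized KL inequality at $U^{k+1}$ combined with the descent estimate at index $k+1$ and the relative-error estimate at index $k$, and the same use of $\delta_k\le\delta_{k+1}$ to reach $s_{k+1}^2\le C(s_k+\varepsilon_k/\delta_{k+1})\Delta_{k+1}$.

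The only difference is in the constants. You split the AM--GM step as $\sqrt{uv}\le u+v/4$ and absorb with a factor $3/4$; your stated $2C$ is looser than needed but still valid. The paper instead uses $2\sqrt{uv}\le u+v$ to get $2s_{k+1}\le s_k+\varepsilon_k/\delta_{k+1}+C\Delta_{k+1}$ and then telescopes directly.
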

\begin{proof}
A bounded sequence approaches its compact cluster set. Thus, Lemma \ref{lem:uniform-KL} provides a  concave desingularizing function \(\varphi\) such that, for all sufficiently large \(k\) satisfying
$H_{k+1}>H_\infty$,
\begin{eqnarray}\label{eq:uniform-KL}
 \varphi'(H_{k+1}-H_\infty)
 \dist\!\left(0,\partial H(U^{k+1})\right)\geq1.
\end{eqnarray}

Suppose first that $H_j=H_\infty$ for some sufficiently large $j$.
Monotonicity and convergence then give
$H_k=H_\infty$ for every $k\geq j$, and the descent estimate implies
$s_k=0$ for every $k\geq j$. The conclusion follows in this case.

It remains to consider the case in which $H_k>H_\infty$ when $k$ is sufficiently large. Define
\[
 \Delta_{k+1}:=
 \varphi(H_{k+1}-H_\infty)
 -\varphi(H_{k+2}-H_\infty)\geq0.
\]
Concavity of $\varphi$, the descent estimate at index $k+1$,
\eqref{eq:uniform-KL}, and the relative-error estimate at index $k$
yield
\begin{eqnarray}\label{eq:Delta-lower}
\begin{aligned}
 \Delta_{k+1}
 &\geq\varphi'(H_{k+1}-H_\infty)(H_{k+1}-H_{k+2})\\
 &\geq\frac{a\delta_{k+1}s_{k+1}^2}
 {b(\delta_ks_k+\varepsilon_k)}.
\end{aligned}
\end{eqnarray}
The denominator in~\eqref{eq:Delta-lower} is positive. Indeed, if
$\delta_ks_k+\varepsilon_k=0$, then the relative-error estimate gives
$\dist(0,\partial H(U^{k+1}))=0$, contradicting
\eqref{eq:uniform-KL}.

Set $C:=b/a$. Since $\delta_k\leq\delta_{k+1}$,
$$
 s_{k+1}^2
 \leq C\left(
 \frac{\delta_k}{\delta_{k+1}}s_k
 +\frac{\varepsilon_k}{\delta_{k+1}}\right)\Delta_{k+1}
 \leq C\left(s_k+\frac{\varepsilon_k}{\delta_{k+1}}\right)
 \Delta_{k+1}.$$
The inequality $2\sqrt{uv}\leq u+v$ therefore gives
\begin{eqnarray}\label{eq:length-recursion}
 2s_{k+1}\leq s_k+\frac{\varepsilon_k}{\delta_{k+1}}
 +C\Delta_{k+1}.
\end{eqnarray}
Summing~\eqref{eq:length-recursion} from a sufficiently large index
$N$ to $T\geq N$ and telescoping the terms $\Delta_{k+1}$ yield
\begin{eqnarray*}
 \sum_{k=N}^{T}s_{k+1}+s_{T+1}
 \leq s_N+\sum_{k=N}^{T}\frac{\varepsilon_k}{\delta_{k+1}}
 +C\varphi(H_{N+1}-H_\infty).
\end{eqnarray*}
Letting $T\to\infty$ and using~\eqref{eq:abstract-error-sum}, we obtain
$\sum_{k=N}^{\infty}s_{k+1}<\infty$. Adding the finite initial
segment completes the proof.
\end{proof}

\begin{remark}
Theorem~\ref{lem:scaled-KL} provides a scaled finite-length variant
of the standard KL argument, motivated by variable-coefficient
KL frameworks \cite{FrankelGarrigosPeypouquet2015}, which already
allow unbounded descent coefficients. Here, it converts the
descent and relative-error estimates for the fixed lifted
S-FSPS potentials into finite primal length.
\end{remark}

\begin{theorem}[Whole-sequence convergence of the primal iterates]
\label{thm:main}
Suppose Assumption~\ref{ass:standing} holds, and let
Algorithm~\ref{alg:SFSPS} use smoothing parameters
$\{\gamma_k\}_{k\geq0}$ satisfying~\eqref{eq:gamma-basic} and
descent coefficients $\{\delta_k\}_{k\geq0}$ defined by
\eqref{eq:delta}.
Fix an integer $q\geq2$ and assume that
\begin{eqnarray}\label{eq:schedule-condition}
\sum_{k=1}^{\infty}
 \frac{\gamma_k^{1-1/q}}{\delta_{k+1}}<\infty.
\end{eqnarray}
Consider either of the following lifted constructions:
\begin{enumerate}[label=(\alph*),leftmargin=2.2em]
\item Let
\[
 E=\Ecal_q^{(1)},\qquad
 U^{k+1}=(x^{k+1},y^{k+1},\gamma_k^{1/q}).
\]

\item Assume that $f$ is differentiable with a Lipschitz continuous
gradient on an open set containing $K(\Sset)$, and let
\[
 E=\Ecal_q^{(2)},\qquad
 U^{k+1}=(x^{k+1},\gamma_k^{1/q}).
\]
\end{enumerate}
Let
\[
 \Omega:=\clust\{U^{k+1}\}_{k\geq1},
\]
and assume that $E$ has the KL property at every point of $\Omega$.
Then:
\begin{enumerate}
\item[(i)] $\Omega$ is nonempty and compact,
$\Omega\subseteq\dom\partial E$, and
\begin{eqnarray}\label{eq:cluster-value}
 E(\overline U)=\overline\theta
 \qquad\text{for every }\overline U\in\Omega,
\end{eqnarray}
where $\overline\theta$ is defined in Theorem~\ref{PriTheo2R}.
\item[(ii)] The primal sequence has finite length:
\begin{eqnarray}\label{eq:finite-length}
\sum_{k=0}^{\infty}\norm{x^{k+1}-x^k}<\infty.
\end{eqnarray}
Consequently, it converges to an exact limiting lifted stationary point
$\overline x$ of~\eqref{eq:problem}.
\end{enumerate}
\end{theorem}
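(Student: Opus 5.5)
The plan is to apply Theorem~\ref{lem:scaled-KL} with $H=E$, the sequence $U^{k+1}$, descent coefficients $\delta_k$, and residuals $\varepsilon_k:=\tau_k^{q-1}+\rho_k+\Delta\gamma_k$. For case (a), the descent estimate is Lemma~\ref{lem:fixed-descent}(i) and the relative-error estimate is Theorem~\ref{lem:relative-error}. For case (b), they are Lemma~\ref{lem:smooth-fixed-descent} and Theorem~\ref{residualDiffF}. One small indexing point: the theorem's $H_k$ must coincide with $Q_k$ (or $P_k$), and $U^k=W^k$ with $\tau$ entry $\tau_{k-1}$. That is exactly the convention of \eqref{eq:lifted-states}, so (a) and (b) of Theorem~\ref{lem:scaled-KL} hold with the same $s_k$. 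The sequence $\delta_k$ is nondecreasing because $\gamma_k$ is nonincreasing.

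\textbf{Step 1: part (i).} Boundedness of $U^k$ follows from Theorem~\ref{PriTheo2R}(ii), together with $\tau_k\le\tau_0$ and $\tau_k\to0$. Hence $\Omega$ is nonempty and compact, and every point of $\Omega$ has $\tau$-coordinate $0$. In case (a), Lemma~\ref{lem:fixed-descent}(iii) gives $E=\overline\theta$ on $\Omega$, and its proof also shows $\overline y\in\partial f(K\overline x)$. Assumption~\ref{ass:standing}(f) gives a $z\in\partial g(A\overline x)$. Proposition~\ref{prop:E-subgradient2}(ii) then yields $\Omega\subseteq\dom\partial E$. In case (b), continuity of $\Gcal_q$ gives $E(\overline x,0)=F(\overline x)$, which equals $\overline\theta$ by Theorem~\ref{PriTheo2R}(vi). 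Proposition~\ref{prop:smooth-potential}(iii) gives membership in $\dom\partial E$. Finally, $H_k\to\overline\theta$ by Lemma~\ref{lem:fixed-descent}(ii) or Lemma~\ref{lem:smooth-fixed-descent}.

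\textbf{Step 2: summability \eqref{eq:abstract-error-sum}, the main obstacle.} I expect the condition $\sum_k\varepsilon_k/\delta_{k+1}<\infty$ to be the only real work. The term $\tau_k^{q-1}/\delta_{k+1}=\gamma_k^{1-1/q}/\delta_{k+1}$ is summable by exactly \eqref{eq:schedule-condition}. For the other two terms, note that $1/\delta_{k+1}\le\gamma_{k+1}/(\chi\sigma_A^2)\le\gamma_0/(\chi\sigma_A^2)$. This gives $\sum\Delta\gamma_k/\delta_{k+1}\le C\sum_k\Delta\gamma_k\le C\gamma_0$ by telescoping. For the $\rho_k$ term,
\[
\frac{\rho_k}{\delta_{k+1}}\le\frac{\gamma_{k+1}}{\chi\sigma_A^2}\cdot\frac{\gamma_{k-1}-\gamma_k}{\gamma_{k-1}}\le\frac{\gamma_{k-1}-\gamma_k}{\chi\sigma_A^2},
\]
which again telescopes. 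So only \eqref{eq:schedule-condition} is a genuine assumption. The key observation is that the $1/\gamma$ growth of $\delta_{k+1}$ exactly neutralizes the relative parameter change $\rho_k$.

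\textbf{Step 3: conclusion.} Theorem~\ref{lem:scaled-KL} gives $\sum s_k<\infty$, and the single term $k=0$ is finite, so \eqref{eq:finite-length} holds. Thus $x^k$ is Cauchy and converges to some $\overline x$. To identify the limit, Theorem~\ref{PriTheo2R}(v) supplies a subsequence with $\delta_{k_j}s_{k_j}\to0$. By Theorem~\ref{PriTheo2R}(vii), every cluster point of $x^{k_j}$ is a limiting lifted stationary point. Since the whole sequence converges, that cluster point is $\overline x$. This is where the nonsummability in \eqref{eq:gamma-basic} enters, and it gives exact rather than approximate stationarity.
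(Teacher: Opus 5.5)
Your proposal is correct and follows essentially the same route as the paper: you apply Theorem~\ref{lem:scaled-KL} with the same residual $\varepsilon_k=\tau_k^{q-1}+\rho_k+\Delta\gamma_k$, use the same telescoping bounds for the $\rho_k$ and $\Delta\gamma_k$ terms (via $1/\delta_{k+1}\le\gamma_{k+1}/(\chi\sigma_A^2)$), and identify the limit through Theorem~\ref{PriTheo2R}(v) and (vii). The only minor difference is in case (b): you get $E(\overline x,0)=\overline\theta$ from Theorem~\ref{PriTheo2R}(vi), after passing to a subsequence along which the bounded $(y^k,z^k)$ also converge, whereas the paper takes the limit of $P_k$ from Lemma~\ref{lem:smooth-fixed-descent}; both arguments are valid.
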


\begin{proof}
(i) Theorem~\ref{PriTheo2R} and $\gamma_k\to0$ imply that the lifted
sequence is bounded and that its cluster set $\Omega$ is nonempty
and compact. We first verify the cluster-value and subdifferential
conditions required by Theorem~\ref{lem:scaled-KL}.

In case (a), let
$U^{k_j+1}\to\overline U=(\overline x,\overline y,0)$.
Since $s_{k_j}\to0$, we also have $x^{k_j}\to\overline x$.
The inclusion $y^{k_j+1}\in\partial f(Kx^{k_j})$ and graph
closedness give
\[
 \overline y\in\partial f(K\overline x),\qquad
 \nuf(\overline x,\overline y)=f(K\overline x).
\]
Lemma~\ref{lem:fixed-descent}(iii) therefore gives
$E(\overline U)=\overline\theta$.
Assumption~\ref{ass:standing}(f) supplies
$\overline z\in\partial g(A\overline x)$.
Proposition~\ref{prop:E-subgradient2}(ii), with $n=0$, yields
$\whpartial E(\overline U)\neq\varnothing$, and hence
$\overline U\in\dom\partial E$.
In case (b), every cluster point has the form $(\overline x,0)$.
Continuity of $\Gcal_q$ on $\Sset\times[0,\overline\tau]$,
positivity and continuity of $f\circ K$, and
Lemma~\ref{lem:smooth-fixed-descent} give
\[
 E(\overline x,0)
 =\lim_{j\to+\infty} E(U^{k_j+1})=\overline\theta
\]
along any subsequence converging to $(\overline x,0)$.
Choose $\overline z\in\partial g(A\overline x)$ as above.
Proposition~\ref{prop:smooth-potential}(iii), with $n=0$, yields
$(\overline x,0)\in\dom\partial E$.
This proves part (i) in both cases.

(ii) We discard finitely many initial iterations 
so that the lifted iterates lie in the effective domain of
 the corresponding potential and all descent and relative-error estimates hold.
The potentials are proper and lower semicontinuous by
Propositions~\ref{prop:E-subgradient} and
\ref{prop:smooth-potential}(i), respectively, and have the KL property
on $\Omega$ by assumption. Their values along the iterates converge
to $\overline\theta$ by Lemmas~\ref{lem:fixed-descent}(ii) and
\ref{lem:smooth-fixed-descent}.
The descent and relative-error estimates required by
Theorem~\ref{lem:scaled-KL} follow from
Lemma~\ref{lem:fixed-descent}(i) and
Theorem~\ref{lem:relative-error} in case (a), and from
Lemma~\ref{lem:smooth-fixed-descent} and
Theorem~\ref{residualDiffF} in case (b).
The sequence $\delta_k$ is positive and nondecreasing
by~\eqref{eq:delta}.
It remains to verify~\eqref{eq:abstract-error-sum} with
$
 \varepsilon_k:=\rho_k+\gamma_k^{1-1/q}+\Delta\gamma_k.$

We note that
\begin{eqnarray*}
\sum_{k=1}^{\infty}\frac{\rho_k}{\delta_{k+1}}\leq\frac1{\chi\sigma_A^2}
       \sum_{k=1}^{\infty}
       \frac{\gamma_{k+1}}{\gamma_{k-1}}\Delta\gamma_k\leq\frac1{\chi\sigma_A^2}
       \sum_{k=1}^{\infty}\Delta\gamma_k
 =\frac{\gamma_0}{\chi\sigma_A^2}<\infty.
\end{eqnarray*}
Also, $\underline\delta:=\inf_k\delta_k>0$ implies
$
 \sum_{k=1}^{\infty}\frac{\Delta\gamma_k}{\delta_{k+1}}
 \leq\frac{\gamma_0}{\underline\delta}<\infty.
$
Together with~\eqref{eq:schedule-condition}, this verifies
\eqref{eq:abstract-error-sum}.
Theorem~\ref{lem:scaled-KL} gives
$
 \sum_{k=1}^{\infty}s_k<\infty.
$
Since $s_0<\infty$, this proves~\eqref{eq:finite-length}, and hence
$x^k\to\overline x\in\Sset$.
By Theorem~\ref{PriTheo2R}(v), there is a subsequence with
$\delta_{k_j}s_{k_j}\to0$. Theorem~\ref{PriTheo2R}(vii) then shows
that $\overline x$ is an exact limiting lifted stationary point.
\end{proof}

\subsection{Definability and smoothing schedules}\label{sec:consequences}

\begin{proposition}[Definability and the KL property]
\label{prop:definable}
Suppose that Assumption~\ref{ass:standing} holds and that
$\Sset$, $f$, $g$, and $h$ are definable in a common o-minimal
expansion of the real field.
Fix an integer $q\geq2$, and let $\Ecal_q^{(1)}$ and
$\Ecal_q^{(2)}$ be
defined in~(\ref{eq:E-definition}) and~(\ref{eq:E2-definition}),
respectively. Then $f^*$, the jointly parameterized Moreau
envelope $g_{\gamma}(u)$, the domains $\Ccal_q^{(1)}$ and
$\Ccal_q^{(2)}$, and the potentials $\Ecal_q^{(1)}$ and
$\Ecal_q^{(2)}$ are definable. Moreover, both potentials
have the KL property throughout their limiting-subdifferential
domains, with concave desingularizing functions. In particular,
these conclusions hold for semialgebraic data satisfying
Assumption~\ref{ass:standing}.
\end{proposition}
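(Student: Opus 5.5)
The plan is to use the standard stability properties of definable sets and functions under first-order operations, and then invoke the nonsmooth KL theorem of Bolte, Daniilidis, Lewis and Shiota for definable lower semicontinuous functions. Throughout we work in the fixed o-minimal structure $\mathcal O$ containing $\Sset$, $f$, $g$, $h$ (hence also $\iota_\Sset$). Linear maps, the inner product, and the norm are semialgebraic and therefore belong to every o-minimal expansion of the real field.

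\emph{Definability of the building blocks.}
\begin{itemize}
\item For $f^*$: its graph is $\{(v,t): t=\sup_x(\ip vx-f(x))\}$. Allowing the value $+\infty$, i.e.\ treating $\dom f^*$ and the restriction of $f^*$ to it separately, this is described by a first-order formula: $t\ge \ip vx-f(x)$ for all $x\in\dom f$, and for every $\epsilon>0$ some $x$ satisfies $t<\ip vx-f(x)+\epsilon$. Both $\dom f^*$ and $f^*|_{\dom f^*}$ are therefore definable, and so is $\nuf(x,y)=\ip{Kx}{y}-f^*(y)$ on its domain.
\item For the joint envelope $(u,\gamma)\mapsto g_\gamma(u)$ on $\R^s\times(0,\infty)$: its graph is defined by a formula expressing that $t$ is the infimum over $w\in\dom g$ of $g(w)+\|w-u\|^2/(2\gamma)$. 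Division by $\gamma>0$ is semialgebraic, so this is definable.
\item Composing with the semialgebraic map $\tau\mapsto\tau^q$ makes $\Gcal_q$ definable on $\Sset\times(0,\overline\tau]$. On the slice $\tau=0$ it equals $g(Ax)+h(x)$, and a piecewise definition over two definable sets is definable.
\end{itemize}

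\emph{Domains and potentials.}
\begin{itemize}
\item $\Ccal_q^{(1)}$ is an intersection of definable sets: $\Sset\times\R^p\times\R$, $\dom f^*$ in the $y$-coordinate, $\clB_R$, $[0,\overline\tau]$, and $\{\nuf\ge m/2\}$. Hence it is definable.
\item $\Ccal_q^{(2)}=\Sset\times[0,\overline\tau]$ is trivially definable.
\item $\Ecal_q^{(1)}$ is a quotient plus $\lambda\tau^q$ on a definable set and $+\infty$ off it. Its graph restricted to the domain is definable, and the domain itself is definable, so $\Ecal_q^{(1)}$ is a definable extended-real-valued function. The same argument applies to $\Ecal_q^{(2)}$, using the definability of $f\circ K$.
\end{itemize}

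\emph{KL property.} By Propositions~\ref{prop:E-subgradient} and \ref{prop:smooth-potential}(i), both potentials are proper and lower semicontinuous. The Bolte--Daniilidis--Lewis--Shiota theorem then applies: a proper lsc function definable in an o-minimal structure has the KL property at every point of $\dom\partial$. The desingularizing function can be taken of the form $\varphi(s)=\int_0^s\psi$ with $\psi$ definable, and after possibly shrinking $a$ it is concave. This is the form of Definition~\ref{def:KL}. Semialgebraic data form a special case, since semialgebraic sets constitute an o-minimal structure.

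\emph{Main obstacle.} The delicate point is bookkeeping for extended-valued functions. Definability of $f^*$ and of the potentials must be understood as definability of the domain together with the finite part of the graph, and the supremum/infimum formulas have to be written so that the value $+\infty$ is handled correctly. A second subtle point is that the cited KL theorem is stated for the limiting subdifferential used here, and that concavity of $\varphi$ near $0$ has to be secured. I would handle this by citing the version that delivers a concave $\varphi\in C^1$ directly, rather than re-deriving it.
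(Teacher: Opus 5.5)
Your proposal follows the paper's route: first-order definability of $f^*$, of the joint envelope $(u,\gamma)\mapsto g_\gamma(u)$, of $\Gcal_q$, of the domains and of the quotient potentials, followed by the Bolte--Daniilidis--Lewis--Shiota nonsmooth KL theorem (the paper applies it in Clarke form and passes to the limiting subdifferential via $\partial E\subseteq\partial^\circ E$). One correction: shrinking $a$ does not make $\varphi(s)=\int_0^s\psi$ concave when $\psi$ is increasing near $0$; the paper uses the monotonicity theorem and, in that case, replaces $\varphi$ by the linear function $s\mapsto\psi(a)s$ on $[0,a)$, whose derivative $\psi(a)\geq\psi(s)$ preserves the KL inequality, which is what the citation you fall back on supplies.
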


\begin{proof}
Since
\[
 \epi f^*
 =\bigl\{(y,t):
     \langle u,y\rangle-f(u)\leq t
     \text{ for every }u\in\dom f\bigr\},
\]
the epigraph of $f^*$ is definable. Its finite-valued graph, and hence
the extended-real-valued function $f^*$, are therefore definable.

For $\gamma>0$, recall that
\[
 g_\gamma(u)
 :=\min_{v\in\R^s}
 \left\{g(v)+\frac{\|u-v\|^2}{2\gamma}\right\}.
\]
Because $g$ is proper, convex, and lower semicontinuous,
the minimum in this definition is finite and attained. It follows that
\begin{align*}
 \epi\bigl((u,\gamma)\mapsto g_\gamma(u)\bigr)=
 \left\{(u,\gamma,t):
   \gamma>0,\ \exists v\in\dom g,\quad
   g(v)+\frac{\|u-v\|^2}{2\gamma}\leq t
 \right\}.
\end{align*}
This set is definable, so the jointly parameterized Moreau envelope is
definable on $\R^s\times(0,+\infty)$.

The map $\tau\mapsto\tau^q$ is polynomial. Composition with this map
and the linear operator $A$, followed by adjoining the branch at
$\tau=0$, shows that
\[
 \Gcal_q(x,\tau)
 =\begin{cases}
   g_{\tau^q}(Ax)+h(x),&\tau>0,\\
   g(Ax)+h(x),&\tau=0,
  \end{cases}
\]
is definable for $\tau\geq0$. Here adjoining the zero branch uses closure
under finite unions of definable graphs.

The original denominator $f\circ K$, the Fenchel denominator
\[
 \nuf(x,y)=\langle Kx,y\rangle-f^*(y),
 \qquad y\in\dom f^*,
\]
and the constraints defining $\Ccal_q^{(1)}$ and $\Ccal_q^{(2)}$ are
definable. On each effective domain, the corresponding denominator is
positive; hence division by the denominator and addition of the
polynomial term $\lambda\tau^q$ preserve definability. We regard the
potentials as extended-real-valued functions and use the standard
convention that such a function is definable when its epigraph is
definable~\cite{BolteDaniilidisLewisShiota2007}. Extending the
restrictions by $+\infty$ outside the definable effective domains
therefore shows that both potentials are definable.
Let $E$ denote either
potential and fix $\overline U\in\dom\partial E$, where $\partial E$
denotes the limiting subdifferential. Write $\partial^\circ E$ for the
Clarke subdifferential. Applying the nonsmooth definable inequality of
\cite[Theorem~14]{BolteDaniilidisLewisShiota2007} to
$E-E(\overline U)$ gives the corresponding definable functions
$\psi$ and $\omega$. Choose a bounded neighborhood $\cal V$ of
$\overline U$ with compact closure. Since $\omega$ is continuous and
strictly positive, after shrinking $\cal V$ and choosing $\eta>0$
small enough so that
$
 \eta<\inf_{U\in\overline{\cal V}}\omega(\|U\|),
$
we obtain a continuous definable function
$\psi:[0,\eta)\to[0,+\infty)$ with $\psi(0)=0$,
$\psi\in C^1(0,\eta)$, and $\psi'>0$, such that
\begin{eqnarray}\label{eq:Clarke-KL-definable}
 \psi'\bigl(E(U)-E(\overline U)\bigr)
 \dist\bigl(0,\partial^\circ E(U)\bigr)\geq1
\end{eqnarray}
whenever $U\in {\cal V}$ and
$0<E(U)-E(\overline U)<\eta$.

The inequality in~\eqref{eq:Clarke-KL-definable} is stated in terms of
the Clarke subdifferential, whereas Definition~\ref{def:KL} uses the
limiting subdifferential. For every proper lower semicontinuous
function,
\[
 \partial E(U)\subseteq\partial^\circ E(U).
\]
Consequently,
$$
 \dist\bigl(0,\partial E(U)\bigr)
 \geq
 \dist\bigl(0,\partial^\circ E(U)\bigr).
$$
Thus~\eqref{eq:Clarke-KL-definable} implies
$$
 \psi'\bigl(E(U)-E(\overline U)\bigr)
 \dist\bigl(0,\partial E(U)\bigr)\geq1,
$$
which is the KL inequality
in Definition~\ref{def:KL}.
Since $\psi'$ is definable, the o-minimal monotonicity theorem
\cite[Chapter~3, Section~1]{vanDenDries1998} implies that, after
shrinking $\eta$ if necessary, $\psi'$ is monotone on $(0,\eta)$.
If $\psi'$ is nonincreasing (which includes the strictly decreasing
and constant cases), then $\psi$ is concave on $[0,\eta)$.
If $\psi'$ is strictly increasing, choose any $a\in(0,\eta)$ and set
\[
   \varphi(s):=\psi'(a)s,\qquad 0\le s<a.
\]
Then $\varphi$ is linear and hence concave, and
\[
   \varphi'(s)=\psi'(a)\geq \psi'(s)>0,
   \qquad 0<s<a.
\]
Therefore, the KL inequality with a concave desingularizing function holds at every point of \(\operatorname{dom}\partial E\), including boundary points with \(\tau=0\) whenever such points belong to \(\operatorname{dom}\partial E\). For the zero-smoothing cluster points arising in the convergence analysis, this membership follows separately from Proposition~\ref{prop:E-subgradient2}(ii) in the Fenchel case and Proposition~\ref{prop:smooth-potential}(iii) in the smooth-denominator case.

Finally, semialgebraic sets and functions are definable in the
o-minimal structure of the real field. The semialgebraic case follows.
\end{proof}

\begin{corollary}\label{cor:definable-main}
Suppose Assumption~\ref{ass:standing} holds and that
$\Sset$, $f$, $g$, and $h$ are definable in a common o-minimal
expansion of the real field. Let $\{\gamma_k\}_{k\geq0}$
satisfy~\eqref{eq:gamma-basic}, and fix an integer $q\geq2$.
\begin{itemize}
\item[(i)] If~\eqref{eq:schedule-condition} holds, then the primal sequence generated by Algorithm \ref{alg:SFSPS} has finite length and converges to an exact limiting lifted stationary point.
\item[(ii)] If
$ \sum_{k=0}^{\infty}\gamma_k^{2-1/q}<\infty,
$
then~\eqref{eq:schedule-condition} holds.
\item[(iii)] Let the smoothing parameters be defined by
\begin{eqnarray}\label{eq:power-schedule}
 \gamma_k=(k+k_0)^{-\beta},\qquad k_0\geq1,\quad 0<\beta\leq1.
\end{eqnarray}
Then~\eqref{eq:schedule-condition} holds if and only if
\begin{eqnarray}\label{eq:p-threshold}
 \beta>\frac{q}{2q-1}.
\end{eqnarray}
Consequently, when $\{\gamma_k\}$ is defined by~\eqref{eq:power-schedule} and
$\frac{q}{2q-1}<\beta\leq1$,
all
conclusions  in part~{\rm(i)} hold.
\end{itemize}
\end{corollary}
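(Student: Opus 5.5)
Part~(i) is assembled from results already in hand, so the plan is to verify the hypotheses of Theorem~\ref{thm:main} in the definable setting. By Proposition~\ref{prop:definable}, both $\Ecal_q^{(1)}$ and $\Ecal_q^{(2)}$ have the KL property at every point of their limiting-subdifferential domains. Theorem~\ref{thm:main}(i), whose proof uses only Propositions~\ref{prop:E-subgradient2}(ii) and~\ref{prop:smooth-potential}(iii) and not the KL assumption, gives $\Omega\subseteq\dom\partial E$. Hence $E$ has the KL property at every point of $\Omega$. Construction~(a) needs no smoothness of $f$, so it applies in general. Under~\eqref{eq:schedule-condition}, Theorem~\ref{thm:main}(ii) then yields finite length and convergence to an exact limiting lifted stationary point.

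For part~(ii) I would use the explicit form of $\delta_{k+1}$. Since $\delta_{k+1}=\chi(L_{\nabla h}+\sigma_A^2/\gamma_{k+1})\geq\chi\sigma_A^2/\gamma_{k+1}$, each summand of~\eqref{eq:schedule-condition} is at most $\gamma_k^{1-1/q}\gamma_{k+1}/(\chi\sigma_A^2)$. Monotonicity gives $\gamma_{k+1}\leq\gamma_k$, so the summand is at most $\gamma_k^{2-1/q}/(\chi\sigma_A^2)$, which is summable by hypothesis.

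For part~(iii) I need a matching lower bound as well, to get the ``only if'' direction. Since $\gamma_{k+1}\leq\gamma_0$, we have $\chi\sigma_A^2/\gamma_{k+1}\leq\delta_{k+1}\leq\chi(L_{\nabla h}\gamma_0+\sigma_A^2)/\gamma_{k+1}$. So $1/\delta_{k+1}$ is comparable, up to positive constants, to $\gamma_{k+1}=(k+1+k_0)^{-\beta}$. Moreover $(k+1+k_0)/(k+k_0)\in[1,2]$ because $k_0\geq1$. It follows that the summand in~\eqref{eq:schedule-condition} is bounded above and below by constant multiples of $(k+k_0)^{-\beta(2-1/q)}$. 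By the $p$-series test, the series converges if and only if $\beta(2q-1)/q>1$, that is, $\beta>q/(2q-1)$. The final sentence of~(iii) then follows from part~(i), since $\beta\leq1$ keeps $\sum\gamma_k=\infty$ and $\gamma_k\downarrow0$, so~\eqref{eq:gamma-basic} holds.

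The only step needing care is the two-sided comparison of $\delta_{k+1}$ with $1/\gamma_{k+1}$, which is what makes the threshold~\eqref{eq:p-threshold} an equivalence rather than merely sufficient. The remaining point is to note that $\Omega\subseteq\dom\partial E$ is established independently of KL, so there is no circularity in invoking Proposition~\ref{prop:definable}.
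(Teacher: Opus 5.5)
Your proposal is correct and follows the paper's proof essentially step for step. Part~(i) combines Proposition~\ref{prop:definable} with Theorem~\ref{thm:main}, part~(ii) uses $\delta_{k+1}\geq\chi\sigma_A^2/\gamma_{k+1}$ together with monotonicity of $\{\gamma_k\}$, and part~(iii) compares $\gamma_k^{1-1/q}/\delta_{k+1}$ from both sides with $(k+k_0)^{-\beta(2-1/q)}$ and then applies the $p$-series test. Your remarks that $\Omega\subseteq\dom\partial E$ is obtained without the KL hypothesis, and your explicit two-sided constants for $\delta_{k+1}$, just fill in details that the paper's terse proof leaves implicit.
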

\begin{proof}
(i) Proposition~\ref{prop:definable} gives the KL property of
$\Ecal_q^{(1)}$ on $\dom\partial\Ecal_q^{(1)}$.
Part~(i) therefore follows from Theorem~\ref{thm:main}.\\
(ii) Monotonicity of $\{\gamma_k\}$ and~\eqref{eq:delta} give
\[
 \frac{\gamma_k^{1-1/q}}{\delta_{k+1}}
 \leq\frac{\gamma_k^{1-1/q}\gamma_{k+1}}{\chi\sigma_A^2}
 \leq\frac{\gamma_k^{2-1/q}}{\chi\sigma_A^2}.
\]
Thus~$ \sum_{k=0}^{\infty}\gamma_k^{2-1/q}<\infty$ implies~\eqref{eq:schedule-condition}.\\
(iii) For~\eqref{eq:power-schedule}, $\delta_{k+1}$ is bounded above and
below by positive constant multiples of $(k+k_0)^\beta$ for all
sufficiently large $k$. Hence
$\gamma_k^{1-1/q}/\delta_{k+1}$ is bounded above and below by
positive constant multiples of $(k+k_0)^{-\beta(2-1/q)}$.
The corresponding series converges exactly when
$\beta(2-1/q)>1$, which is equivalent to~\eqref{eq:p-threshold}.
Since $0<\beta\leq1$, the power schedule also satisfies the
nonsummability condition in~\eqref{eq:gamma-basic}. This proves
part~(iii).
\end{proof}

\begin{remark}
For every $1/2<\beta\leq1$, one can choose an integer $q\geq2$
satisfying $q>\beta/(2\beta-1)$. The resulting power schedule then
lies in the convergence range of Corollary~\ref{cor:definable-main}.
\end{remark}

\begin{corollary}[The square-root lift]\label{cor:square-root}
Under the hypotheses of Corollary~\ref{cor:definable-main}, choosing
$q=2$ gives the sufficient condition $\sum_k\gamma_k^{3/2}<\infty$.
For power schedules, this covers $2/3<\beta\leq1$, including the
harmonic schedule $\beta=1$.
\end{corollary}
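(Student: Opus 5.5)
This corollary follows by specializing Corollary~\ref{cor:definable-main} to the lift exponent $q=2$, so the plan is a short verification. Setting $q=2$ gives $2-1/q=3/2$. Corollary~\ref{cor:definable-main}(ii) then says that $\sum_k\gamma_k^{3/2}<\infty$ implies the schedule condition \eqref{eq:schedule-condition} with $q=2$. Recall that the bound there comes from
\[
\frac{\gamma_k^{1/2}}{\delta_{k+1}}\leq\frac{\gamma_k^{1/2}\gamma_{k+1}}{\chi\sigma_A^2}\leq\frac{\gamma_k^{3/2}}{\chi\sigma_A^2},
\]
which uses only $\delta_{k+1}\geq\chi\sigma_A^2/\gamma_{k+1}$ and the monotonicity of $\{\gamma_k\}$. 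With \eqref{eq:schedule-condition} in hand, Corollary~\ref{cor:definable-main}(i) gives finite primal length and convergence to an exact limiting lifted stationary point. Nonsummability of $\{\gamma_k\}$ is part of the standing hypotheses through \eqref{eq:gamma-basic}, and definability supplies the KL property via Proposition~\ref{prop:definable}.

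For power schedules I will apply Corollary~\ref{cor:definable-main}(iii) with $q=2$. The threshold \eqref{eq:p-threshold} becomes $\beta>2/(2\cdot2-1)=2/3$. Combined with the restriction $\beta\leq1$ in \eqref{eq:power-schedule}, this gives exactly the range $2/3<\beta\leq1$. I will also check directly that $\sum_k(k+k_0)^{-3\beta/2}<\infty$ if and only if $3\beta/2>1$, so the sufficient condition $\sum_k\gamma_k^{3/2}<\infty$ covers the same range; in this case the two conditions coincide. The harmonic case $\beta=1$ lies in this range, since $1>2/3$, and it also satisfies $\sum_k\gamma_k=\infty$.

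No step is a genuine obstacle. The only point to state carefully is that part~(iii) of Corollary~\ref{cor:definable-main} is an \emph{if and only if}. This makes $2/3<\beta\leq1$ exactly the range of power schedules covered with $q=2$, not merely a subset of it.
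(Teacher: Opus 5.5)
Your proposal is correct and follows the paper's argument, which is simply to set $q=2$ in Corollary~\ref{cor:definable-main}. Part~(ii) then yields the series $\sum_k\gamma_k^{3/2}$, and in part~(iii) the threshold $q/(2q-1)$ becomes $2/3$, so the covered power range is $2/3<\beta\leq1$, which contains the harmonic case $\beta=1$. Your extra checks are accurate: the bound $\gamma_k^{1/2}/\delta_{k+1}\leq\gamma_k^{3/2}/(\chi\sigma_A^2)$, and the fact that $\sum_k(k+k_0)^{-3\beta/2}$ converges exactly when $\beta>2/3$.
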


\begin{table}[htbp]
\centering
\caption{Sufficient conditions for finite primal length under power smoothing schedules.}\label{tab:schedules}
\begin{tabular}{ccc}
\toprule
Lift & Sufficient series & Power range\\
\midrule
$\gamma=\tau^2$ & $\sum_k\gamma_k^{3/2}<\infty$ & $2/3<\beta\leq1$\\
$\gamma=\tau^3$ & $\sum_k\gamma_k^{5/3}<\infty$ & $3/5<\beta\leq1$\\
$\gamma=\tau^4$ & $\sum_k\gamma_k^{7/4}<\infty$ & $4/7<\beta\leq1$\\
$\gamma=\tau^q$ & $\sum_k\gamma_k^{2-1/q}<\infty$
 & $q/(2q-1)<\beta\leq1$\\
\bottomrule
\end{tabular}
\end{table}

\section{Composite specialization and examples}
\label{sec:examples}
\subsection{Nonconvex composite optimization}

\begin{theorem}
\label{cor:composite}
Consider the problem
\begin{eqnarray}\label{comp}
 \min_{x\in\Sset}g(Ax)+h(x)
\end{eqnarray}
under Assumption~\ref{ass:standing}(a)--(c) and (f).
Let $A\neq0$. Suppose that
$\Sset$, $g$, and $h$ are definable in a common o-minimal expansion
of the real field. Let $x^0\in\Sset$, $z^0\in\R^s$, and
\[
 \gamma_k=(k+k_0)^{-\beta},
 \qquad k_0\geq1,\qquad \frac12<\beta\leq1.
\]
Let $\delta_k$ be defined by~\eqref{eq:delta}, and consider the iteration
\begin{eqnarray}\label{eq:composite-iteration}
 x^{k+1}=\proj_{\Sset}\left(
 x^k-\frac{A^*z^k+\nabla h(x^k)}{\delta_k}\right),
 \qquad
 z^{k+1}=\nabla g_{\gamma_k}(Ax^{k+1}).
\end{eqnarray}
The primal sequence $\{x^k\}_{k\ge 0}$ generated by
\eqref{eq:composite-iteration} has finite length and converges to an exact limiting stationary point $\overline x$
satisfying
\begin{eqnarray}\label{eq:composite-stationarity}
 0\in A^*\partial g(A\overline x)
      +\nabla h(\overline x)+N_{\Sset}(\overline x).
\end{eqnarray}
\end{theorem}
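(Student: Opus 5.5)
The plan is to realize the composite problem~\eqref{comp} as a special case of~\eqref{eq:problem} and then apply Corollary~\ref{cor:definable-main}. Take $K=0$ and let $f\equiv1$ be the constant function on $\R^p$ (say $p=1$). Then $f$ is proper, convex, lower semicontinuous, finite everywhere, and $f(Kx)=1>0$, so Assumption~\ref{ass:standing}(d) holds. Assumption~\ref{ass:standing}(e) needs $\alpha>0$, and this is not assumed for~\eqref{comp}. I would enforce it by a constant shift: replace $h$ by $h+c$ with $c$ large. This keeps Lipschitz continuity of $\nabla h$, definability, and $\nabla h$ itself, so neither the iteration~\eqref{eq:composite-iteration} nor the stationarity condition~\eqref{eq:composite-stationarity} changes. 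Compactness of $\Sset$ together with Lemma~\ref{lem:uniform-moreau}(iv) and continuity of $h$ makes $g\circ A+h$ bounded below on $\Sset$, so such a $c$ exists. The constant function $f\equiv1$ is definable, so all data lie in a common o-minimal structure.

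Next I would check that S-FSPS reduces to~\eqref{eq:composite-iteration}. With $K=0$ we have $\partial f(Kx^k)=\{0\}$, so $y^{k+1}=0$ and the term $\theta_kK^*y^{k+1}$ vanishes. The $x$-update~\eqref{eq:algorithm-x} is then exactly the projected step in~\eqref{eq:composite-iteration}. By Lemma~\ref{lem:moreau-identities}(ii) the $z$-update~\eqref{eq:algorithm-z} equals $\nabla g_{\gamma_k}(Ax^{k+1})$. The $\theta$-update is computed but never feeds back into the iteration. The proof of Theorem~\ref{PriTheo2R} notes explicitly that $K=0$ is permitted. I would use construction (b) of Theorem~\ref{thm:main}: here $f$ is trivially differentiable with Lipschitz gradient, and $\Ecal_q^{(2)}(x,\tau)=\Gcal_q(x,\tau)+\lambda\tau^q$, which avoids any issue with $\dom f^*=\{0\}$ in the Fenchel potential. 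Proposition~\ref{prop:definable} gives the KL property for $\Ecal_q^{(2)}$. Corollary~\ref{cor:definable-main}(i) is stated for $\Ecal_q^{(1)}$, but its proof applies verbatim with (b) in place of (a).

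For the schedule, fix $\beta\in(1/2,1]$ and choose an integer $q\geq2$ with $q>\beta/(2\beta-1)$, as in the remark following Corollary~\ref{cor:definable-main}. This is equivalent to $\beta>q/(2q-1)$, so Corollary~\ref{cor:definable-main}(iii) yields~\eqref{eq:schedule-condition}. The power schedule also satisfies~\eqref{eq:gamma-basic}. Theorem~\ref{thm:main}(ii) then gives finite length and convergence of $x^k$ to a limiting lifted stationary point $\overline x$.

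The remaining step is translating stationarity. In~\eqref{eq:lifted-stationarity} we have $f(K\overline x)=1$, $\partial f(K\overline x)=\{0\}$, and $K^*=0$, so the condition becomes $0\in A^*\partial g(A\overline x)+\nabla h(\overline x)+\partial\iota_\Sset(\overline x)$. This is~\eqref{eq:composite-stationarity}, since $\partial\iota_\Sset=N_\Sset$. The main obstacle I expect is verifying that every ingredient tolerates the degenerate choices $K=0$ and $\dom f^*=\{0\}$, together with the shift by $c$. The step to recheck is that the bounds $m<f(Kx)\le M$ in Lemma~\ref{lem:denominator-bounds} hold, which they do by taking, for example, $m=1/2$ and $M=1$.
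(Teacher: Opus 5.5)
Your proposal is correct and follows the paper's proof. The shared steps are the constant shift of $h$ to secure Assumption~\ref{ass:standing}(e), the choice $f\equiv1$ so that $y^{k+1}=0$ and S-FSPS collapses to \eqref{eq:composite-iteration}, an integer $q>\beta/(2\beta-1)$ to invoke Corollary~\ref{cor:definable-main}(iii), and the reduction of \eqref{eq:lifted-stationarity} to \eqref{eq:composite-stationarity}. The differences are cosmetic: the paper takes $K=I$ rather than $K=0$ and applies Corollary~\ref{cor:definable-main} directly through the Fenchel potential $\Ecal_q^{(1)}$, which works without difficulty even though $\dom f^*=\{0\}$, so your detour through construction (b) with $\Ecal_q^{(2)}$ is valid but unnecessary.
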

\begin{proof}
Assumption~\ref{ass:standing}(f) implies that $g(Ax)$ is finite on
$\Sset$. Hence $g\circ A+h$ is lower semicontinuous and finite on the
compact set $\Sset$, so
\[
    \inf_{x\in\Sset}\bigl(g(Ax)+h(x)\bigr)>-\infty.
\]
Choose a constant $c$ such that
$g(Ax)+h(x)+c>0$ for every $x\in\Sset$, and set
$\widetilde h:=h+c$. Since $\nabla\widetilde h=\nabla h$, this
shift changes neither the iteration~\eqref{eq:composite-iteration}
nor the stationarity condition~\eqref{eq:composite-stationarity}.
Now set $f\equiv1$ and $K=I$. Then $\partial f(Kx)=\{0\}$, and
S-FSPS for the shifted fractional problem reduces exactly to
\eqref{eq:composite-iteration}.

Because $\beta>1/2$, choose an integer $q\geq2$ such that
$
 q>\frac{\beta}{2\beta-1},
$
which is equivalent to $\beta>q/(2q-1)$. The shifted data remain
definable, and Corollary~\ref{cor:definable-main}(iii) therefore gives
finite primal length and convergence to an
exact stationary point of the composite problem (\ref{comp}). For $f\equiv1$, the lifted-stationarity condition
reduces precisely to~\eqref{eq:composite-stationarity}.
\end{proof}
\begin{remark}
Assumption~\ref{ass:standing}(e)  is unnecessary in the composite setting, as shown by the constant-shift argument in the proof of Theorem \ref{cor:composite}.
\end{remark}
\subsection{Examples and limitations}
\label{subsec:examples-limitations}
\begin{example}
\label{ex:active}
Let
\begin{eqnarray}\label{Example6.2}
 \Sset=[-1,1]^2,\quad
 A=\begin{pmatrix}1&0\\2&0\end{pmatrix},\quad
 K=\begin{pmatrix}0&1\end{pmatrix},\quad
 g(u)=\abs{u_1}+\abs{u_2},
\end{eqnarray}
and let $f(v)=2+\abs v$ and $h(x)=1+x_2^2/2$. Then
\[
 F(x)=\frac{3\abs{x_1}+1+x_2^2/2}{2+\abs{x_2}}.
\]
All data are semialgebraic. Assumption~\ref{ass:standing} holds with
$L_{\nabla h}=1$, $\alpha=1$, $\sigma_A^2=5$, $\ell=\sqrt2$,
$m=1$, $M=3$, and $B_y=1$. The set $\Sset$ is full-dimensional,
both $g\circ A$ and $f\circ K$ are nonconstant, and $A$ has rank one
with no zero row.
Since the data are semialgebraic and satisfy Assumption 2.1, Corollary~\ref{cor:definable-main} applies for every admissible denominator-subgradient selection, provided that the smoothing schedule satisfies its hypotheses. A direct calculation shows that the
problem (\ref{eq:problem}) with the data specified in
(\ref{Example6.2}) has three limiting lifted stationary points:
$
x^{(1)}=(0,0),\; x^{(2)}=(0,\sqrt{6}-2),\;\mbox{and}\; x^{(3)}=(0,2-\sqrt{6}).
$

However, \(x^{(1)}=(0,0)\) is not a limiting stationary point of
problem~\eqref{eq:problem} for the data specified in
Example~\ref{ex:active}, since
\[
 \partial(F+\iota_{\Sset})(0,0)
 =\left[-\frac32,\frac32\right]
       \times\left\{-\frac14,\frac14\right\}
 \not\ni(0,0).
\]
Moreover, initialization with $x^0=(0,0)$, $z^0=0$, and
$\theta_0=1/2$, followed by the admissible selections
$y^{k+1}=0$, gives $x^k=(0,0)$ and $z^k=0$ for all $k$.
Thus the exact limiting lifted stationarity conclusion of
Theorem~\ref{thm:main} cannot in general be strengthened to
$0\in\partial(F+\iota_{\Sset})(\overline x)$.

\end{example}

\begin{remark}\label{rem:whole-sequence}
The convergence results concern the primal sequence $\{x^k\}$
and the quotient values $\{\theta_k\}$. If
$\partial f(K\overline x)$ is a singleton, boundedness and graph
closedness also give convergence of $y^{k+1}$. If
$\partial g(A\overline x)$ is a singleton, the identity
$z^k\in\partial g(Ax^k-\gamma_{k-1}z^k)$ gives convergence of $z^k$.
\end{remark}

The following examples clarify two aspects of the convergence results.
First, convergence  of the primal sequence does not
ensure convergence of the dual sequence. Second, nonsummability of
the smoothing parameters cannot be omitted from a general
stationarity guarantee under the stated assumptions.

\begin{example}
\label{ex:y-oscillation}
Let $\Sset=\{0\}\subset\R$, $A=K=1$,
$g(u)=1+\abs u$, $f(v)=1+\abs v$, and $h=0$.
Initialize $x^0=z^0=0$ and $\theta_0=1$.
Projection yields $x^k=0$ for every $k\geq0$.
Since
\[
 g^*(z)=-1+\ind_{[-1,1]}(z),
 \qquad g_\gamma(0)=1 \quad(\gamma>0),
\]
the dual and quotient updates yield $z^k=0$ and $\theta_k=1$.
On the other hand, $\partial f(0)=[-1,1]$, so
\[
 y^{k+1}=\frac12(-1)^{k+1}
\]
is an admissible, nonconvergent sequence of dual selections.
Moreover, these selections lie in
$\operatorname{int}(\dom f^*)=(-1,1)$.
Thus, even for semialgebraic data and a constant primal sequence,
the full primal--dual sequence need not converge.
\end{example}

\begin{example}\label{ex:summable}
Let $\Sset=[0,1]$, $A=K=1$, $g(u)=u$, $h(x)=1$, $f(v)=1$,
and $\chi=2$. Choose $x^0=z^0=1$, any $\theta_0>0$, and
$\gamma_k=2^{-(k+2)}$. The standing  assumptions hold with
$L_{\nabla h}=0$, $\ell=1$, and $\alpha=1$; the smoothing sequence
violates the nonsummability condition~\eqref{eq:gamma-basic}.
The dual updates give $z^k=1$ and $y^{k+1}=0$, so
$
 x^{k+1}=x^k-\frac{\gamma_k}{2},$ and
 $x^k=1-\frac12\sum_{j=0}^{k-1}\gamma_j\rightarrow\frac{3}{4}$.
All iterates lie in $[3/4,1]$, so projection does not alter this formula.
We note that
$0\notin1+N_{\Sset}(3/4)$. This shows that the stationarity conclusion
in Theorem~\ref{thm:main} cannot  be guaranteed without the nonsummability condition ~\eqref{eq:gamma-basic}.
\end{example}

\subsection{Relation to the earlier
counterexamples}
\label{sec:relationship-counterexamples}
The constructions in~\cite{Tao2026} produce a slowly rotating primal
sequence satisfying
\begin{eqnarray}\label{inftynew}
 \clust\{x^k\}=\{1\}\times\mathbb S^1,
 \qquad
 \sum_k\norm{x^{k+1}-x^k}=+\infty,
\end{eqnarray}
although every cluster point is an exact limiting lifted stationary
point. The smoothing schedule satisfies
$\sum_k\gamma_k=+\infty$ and $\sum_k\gamma_k^{3/2}<\infty$,
and thus meets the schedule requirements for the square-root lift
$q=2$. For the basic construction, where $f\equiv1$, the applicable
potential is $\Ecal_2^{(2)}$. If this potential had the KL property
at every lifted cluster point, Theorem~\ref{thm:main} would imply
$\sum_k\norm{x^{k+1}-x^k}<\infty$, contradicting (\ref{inftynew}). Hence $\Ecal_2^{(2)}$ must fail to have the
KL property at one or more lifted stationary points. 

\section{Conclusion}\label{sec:conclusion}
Under a KL assumption on a fixed lifted potential and suitable
smoothing-schedule conditions, we have established whole-sequence
convergence of the primal iterates of S-FSPS to an exact limiting
lifted stationary point. The
result is obtained by constructing fixed lifted potentials and developing
a scaled KL finite-length analysis framework that accommodates the changing
smoothing parameters, including the unbounded descent coefficients
induced by vanishing smoothing. Our principal contribution is the
construction of these potentials together with compatible
sufficient-decrease and relative-error estimates along the original
variable-smoothing trajectory. Theorem~\ref{lem:scaled-KL} supplies the scaled finite-length
argument needed to convert these estimates into finite primal length.
After scaling, the residual generated by changes in the Moreau
parameter is controlled through a telescoping argument, while the
remaining smoothing contribution is handled by a fixed power lift.
Under the standing assumptions, the nonsummability of the smoothing
parameters ensures that the primal sequence converges to an exact limiting
lifted stationary point.

For the Fenchel potential, the subgradient construction via a smooth
lower support does not require calmness of $f^*$, while the
smooth-denominator potential uses a lower-dimensional lifted space.
For definable data, the convergence result covers power schedules
$\gamma_k=(k+k_0)^{-\beta}$ with $1/2<\beta\leq1$, provided that the
lift exponent is chosen suitably. The examples show that finite
primal length does not ensure convergence of the dual sequence and
that summable smoothing can lead to a nonstationary primal limit.
Future directions include extending the analysis to power schedules
with $0<\beta\leq1/2$ and to more general smoothing schedules.

\end{document}